\newcommand{\ignore}[1]{}
\newtheorem{dummy}{Dummy}
\newtheorem{lemma}[dummy]{Lemma}
\newtheorem{theorem}[dummy]{Theorem}
\newtheorem{proposition}[dummy]{Proposition}
\newtheorem{corollary}[dummy]{Corollary}
\theoremstyle{definition}
\newtheorem{example}[dummy]{Example}
\newtheorem{remark}[dummy]{Remark}
\author{C. Brown}
\author{S. Pumpl\"un}
\email{Christian.Brown@nottingham.ac.uk; susanne.pumpluen@nottingham.ac.uk}
\address{School of Mathematical Sciences\\
University of Nottingham\\ University Park\\ Nottingham NG7 2RD\\
United Kingdom }
\keywords{Skew polynomial ring, skew polynomial, solvable crossed
product algebra, generalized cyclic algebra, cyclic subalgebra, crossed product subalgebra, admissible group.}
\subjclass[2010]{Primary: 16S35; Secondary: 16K20}
\begin{document}

\title[Solvable crossed product algebras revisited]
{Solvable crossed product algebras revisited}

\begin{abstract}
For any central simple algebra over a field $F$ which contains a maximal subfield $M$
with non-trivial automorphism group $G = {\rm Aut}_F(M)$,  $G$ is solvable if and only if
the algebra contains a finite chain of subalgebras
which are generalized cyclic algebras over their centers  (field
extensions of $F$) satisfying certain conditions.  These subalgebras are related to a normal subseries of $G$.
A crossed product algebra $F$ is hence solvable if and
only if it can be constructed out of such a finite chain of subalgebras.
This result was stated for division crossed product algebras
by Petit, and overlaps with a similar result by Albert
 which, however, is not explicitly stated in these terms.
 In particular, every solvable crossed product division algebra  is a generalized cyclic
algebra over $F$.
\end{abstract}

\maketitle

%
\section*{Introduction}
%

Let $F$ be a field. A central simple algebra $A$ over $F$ of degree
$n$ is a \emph{crossed product algebra} if it contains a maximal
subfield $M$ (i.e. with $[M:F]=n$) that is Galois. To be more precise, $A$ is also
called a \emph{$G$-crossed product algebra}, if $G={\rm Gal}(M/F)$ is the Galois
group of $M/F$. Crossed product algebras play an important role in
the theory of central simple algebras: every element in the Brauer
group of $F$ is similar to a crossed product algebra, moreover, their
multiplicative structure can be described by a group action. It is
well known that any central simple algebra of degree 2, 3, 4, 6 or 12
 is a crossed product algebra. Moreover, any central
simple algebra over a local or global field is a crossed product
algebra (in that case the algebras even contain  a maximal
subfield that is cyclic).

Skew polynomial rings
have been sucessfully used in
the past to construct  central simple algebras. These
appear for instance as quotient algebras $D[t;\sigma]/(f)$ when
factoring out a two-sided ideal generated by a twisted polynomial
$f\in D[t;\sigma]$ with $D$ a finite-dimensional central division algebra over $F$ in \cite{Am2} or \cite[Sections 1.5, 1.8,
1.9]{J96}. Following Jacobson \cite[p.~19]{J96}, when $\sigma|_{F}$
has finite order $m$ and $f(t)=t^m-d\in D[t;\sigma]$, $d\in {\rm
Fix}(\sigma)^\times$, is an invariant polynomial, such a quotient
algebra is also called a \emph{generalized cyclic algebra}, and
denoted $(D,\sigma,d)$. In characteristic zero, generalized cyclic division algebras can be considered
to be the noncommutative analogue of simple algebraic field
extensions. To our knowledge, generalized cyclic division algebras appear for the
first time in a paper by Amitsur \cite{Am2}, where they are indeed called noncommutative
cyclic fields. They are examples of crossed products of central simple algebras which were introduced
 by Teichm\"uller \cite{Tei} in 1940.

 In this paper, we will revisit a result on the structure of crossed product algebras
 with solvable Galois group due to both Albert
 \cite[p.~182-187]{albert1939structure}
  and Petit \cite[Section 7]{P66}.

To be more precise, we write up the proof for Albert's result  following the approach given by
 Petit, i.e. using
generalized cyclic algebras (none of Petit's results  are proved).
 In the process, we generalize some results to central simple algebras which need neither be crossed products nor
  division algebras. In order to do so, we  extend the classical
 definition of a generalized cyclic algebra
$(D,\sigma,d)$ as we do not assume that $D$ needs to be a division
algebra.

As a special case we obtain that a
$G$-crossed product algebra is solvable if and only if it can be
constructed as a finite chain of subalgebras over $F$ which are
generalized cyclic algebras over their centers, which  are
field extensions of $F$. The  generalized cyclic algebras appearing
in this chain correspond to the normal subgroups in a chain of
normal subgroups of the solvable group $G$. We highlight how the
structure of the solvable group (i.e., its chain of normal subgroups
$G_i$) is connected to the structure of the algebra, and how each
subalgebra is related to a normal subgroup $G_i$ in the chain and the
order of the factor groups  $G_{i+1}/G_i$.

The paper is structured as follows. After the basic terminology in
Section  \ref{sec:prel} we look at the existence of
crossed product algebras and in particular, of  cyclic algebras, inside central simple algebras in Section
\ref{sec:aux}. As a byproduct, we show that even if a central division algebra $A$ over $F$ is a noncrossed product,
if it contains a maximal field extension $M$  with a non-trivial $\sigma\in G = {\rm Aut}_F(M)$ of order $h$,
then it contains a cyclic division algebra of degree $h$, and a crossed product algebra $(M,G,\mathfrak{a})$ of degree
$|G|$ as well, both of them not necessarily with center $F$, however (Theorem \ref{cor:importantI}).

 The first  results on the structure of central simple algebras which
contain a maximal subfield with  non-trivial  solvable group $G={\rm Aut}_F(M)$ are stated in Section \ref{sec:main}
(Theorems \ref{thm:Petit (29)} and \ref{thm:Petit (29) Generalised}). These algebras have certain chains of
 generalized cyclic algebras (with centers larger than $F$) as subalgebras.

As a consequence, we can show in Section \ref{sec:crossedproduct} that all solvable crossed product algebras can  be constructed as chains of
such generalized cyclic algebras and that if a central simple algebra contains a maximal subfield with $G={\rm Aut}_F(M)$
  that this $G$ is  solvable exactly if there is such a chain
 (Theorems \ref{thm:main1} and Corollary \ref{thm:Petit (29) Galois version}). In particular, every solvable $G$-crossed product division algebra is  a
generalized cyclic algebra (Corollary \ref{cor:Petit (29) Galois version}).
Some straightforward applications to admissible groups are given in Section
\ref{sec:app}. In Section \ref{sec:last} we generalize a result on crossed product algebras with Galois group
 $G\cong\mathbb{Z}_2\times\mathbb{Z}_2 $ by Albert \cite[p. 186]{albert1939structure}, cf. also \cite[Theorem 2.9.55]{J96},
to  crossed product algebras with $G$ any abelian group, and give a recipe how to construct
central division algebras containing a given Galois field extension with abelian
Galois group from a chain of generalized cyclic algebras, complementing the construction of such algebras via generic
algebras by Amitsur and Saltman described in \cite[4.6]{J96}.

Most of the results presented here are part of the first author's PhD thesis \cite{CB} written
under the supervision of the second author.

%
%

\section{Preliminaries} \label{sec:prel}

\subsection{Twisted polynomial rings and (nonassociative) algebras}

In the following, we recall some results from \cite{J96} and
\cite{P66} for the convenience of the reader.

Let $S$ be a unital  (associative, not necessarily commutative) ring
and $\sigma$ an injective ring endomorphism of $S$.
 The \emph{twisted polynomial ring} $R=S[t;\sigma]$
is the set of twisted polynomials
$$a_0+a_1t+\dots +a_nt^n$$
with $a_i\in S$, where addition is defined term-wise and multiplication by
$$ta=\sigma(a)t \quad (a\in S).$$
For $f=a_0+a_1t+\dots +a_nt^n$ with
$a_n\not=0$ define ${\rm deg}(f)=n$ and ${\rm deg}(0)=-\infty$.
 An element $f\in R$ is \emph{irreducible} in $R$ if it is not a unit and  it has no proper factors, i.e if there do not exist $g,h\in R$ with
 ${\rm deg}(g),{\rm deg} (h)<{\rm deg}(f)$ such
 that $f=gh$. An element $f\in R$ is called \emph{invariant} (or \emph{two-sided}) if $Rf$ is a two-sided ideal
 in $R$.

 We now briefly explain how classical quotient algebras $R/Rf$, $f$ invariant, fit into the nonassociative
 setting of Petit's paper \cite{P66}:

 In the following,  we always assume that $f(t)\in S[t;\sigma]$  is
monic of degree $m>1$. Then
for all $g,f\in R$, $g\not=0$, there exist unique $r,q\in R$ such that ${\rm deg}(r)<{\rm deg}(f)$ and
$$g=qf+r,$$
 e.g. see \cite{Pu16}.

 In \cite{P66} and \cite{Pu16}, it is shown
that the additive group $S_f=\{g\in R\,|\, {\rm deg}(g)<m\}$ of
twisted polynomials of degree less that $m$ is a nonassociative unital ring together with the multiplication given by
$$g\circ h=gh \,\,{\rm mod}_r f,$$
where ${\rm mod}_r f$ denotes the
remainder ${\rm mod}_r f$ of right division by $f$. This algebra is also denoted by $R/Rf$.

Note that since the remainders are uniquely determined, the elements
in the set $S_f$ also canonically represent the elements of the left
$S[t;\sigma]$-module $S[t;\sigma]/ S[t;\sigma]f$.

 $S_0=\{a\in S\,|\, ah=ha\text{ for all } h\in S_f\}$ is a commutative subring of $S$, and $S_f$ is a unital
algebra over $S_0$.
 If $S$ is a division ring,  the structure of $S_f$ is extensively investigated in  \cite{P66},
 else see \cite{Pu16}. For instance,
 if $S$ is a division ring and the $S_0$-algebra $S_f$ is finite-dimensional, then
 $S_f$ is a division algebra if and only if $f(t)$ is irreducible \cite[(9)]{P66}.
In the following, we will only be interested in  the case that $S_f$
is a unital associative algebra,  which happens if and only if $f$ is
an \emph{invariant} polynomial in $R$, i.e. generates a two-sided
ideal $Rf$ in $R$ \cite{P66}. In that case, $S_f=R/Rf$ is the well known
quotient algebra obtained by factoring out the two-sided ideal in $R$
generated by $f$.

We will moreover only need the case that $S$ is a finite-dimensional
algebra over a field $F$ with center $F$ and only consider
automorphisms $\sigma$ of $S$ such that $\sigma|_F$ has finite order
$m$. Then, by the Theorem of Skolem-Noether, $\sigma^m$ is an inner
automorphism $I_u(y)= u y u^{-1}$ of $S$ \cite[Sec. 1.4]{J96}.

\subsection{Generalized cyclic algebras and generalized cyclic extensions}\label{sec:gca}

Let $S$ be a finite-dimensional simple algebra of degree $n$ over
its center $F=C(S)$,  and $\sigma\in {\rm Aut}(S)$ such that
$\sigma|_{F}$ has finite order $m$ and fixed field $F_0={\rm
Fix}(\sigma)$.

Generalizing Jacobson's definition \cite[p.~19]{J96}, which assumes
that $S$ is a division algebra,
 we define a \emph{generalized cyclic algebra} as an associative algebra
of the type $S_f=S[t;\sigma]/S[t;\sigma]f(t)$  which is constructed
using an invariant twisted polynomial
$$f(t)=t^m-d\in S[t;\sigma],$$
with $d\in {\rm Fix}(\sigma)^\times$ non-zero.

We write $S_f=(S,\sigma, d)$ for this algebra. $(S,\sigma, d)$ is a
central simple algebra over $ F_0={\rm Fix}(\sigma)$ of degree $mn$
and the  centralizer of $S$ in
$(S,\sigma, d)$ is $F$ (\cite[p.~20]{J96} if $S$ is division, else \cite{T}).

Note that this definition canonically generalizes the one of a cyclic
algebra $(F/F_0,\sigma,d)$, where $ f(t)=t^m-d\in F[t;\sigma]$, and
$F/F_0$ is a cyclic Galois extension of degree $m$ with Galois group
$G=<\sigma>$. This is the algebra
$S_f=F[t;\sigma]/F[t;\sigma](t^m-d) $, cf. \cite[p.~19]{J96} or \cite[p.~13-13]{P66}.
This case appears when $S=F$ above.

Generalized cyclic algebras are a special case of generalized crossed products, i.e.
crossed products of simple algebras cf. for instance  \cite[p.~35]{H},
 \cite{KY}, \cite{T}. We will mostly need crossed products involving Galois fields:

\subsection{Crossed product algebras}
Let $F$ be a field and  $A$ be a (finite-dimensional) central simple
algebra over $F$ of degree $n$. $A$ is called a \emph{$G$-crossed
product algebra} or \emph{crossed product algebra}
 if it contains a maximal field extension $K/F$  which is Galois with Galois group $G={\rm Gal}(K/F)$.

 Equivalently, we can define a ($G$-)crossed product algebra $(M,G,\mathfrak{a})$ over $F$ via factor sets starting with a finite Galois field extension
 as follows: Take a finite Galois field extension  $M/F$ of degree $n$ with
Galois group $G$. Suppose $\{ a_{\sigma,\tau} \ \vert \ \sigma, \tau
\in G \}$ is a set of elements of $M^{\times}$ such that
\begin{equation} \label{eqn:Crossed product criteria 1}
a_{\sigma,\tau} a_{\sigma \tau, \rho} = a_{\sigma,\tau \rho}
\sigma(a_{\tau,\rho}),
\end{equation}
for all $\sigma, \tau, \rho \in G$. Then a map $\mathfrak{a}: G
\times G \rightarrow M^{\times}, \ (\sigma,\tau) \mapsto
a_{\sigma,\tau}$, is called a \emph{factor set} or \emph{2-cocycle}
of $G$.

An associative multiplication is defined on the $F$-vector space
$\bigoplus_{\sigma \in G} M x_{\sigma}$ by
\begin{equation} \label{eqn:Crossed product criteria 2}
x_{\sigma} m = \sigma(m) x_{\sigma},
\end{equation}
\begin{equation} \label{eqn:Crossed product criteria 3}
x_{\sigma} x_{\tau} = a_{\sigma, \tau} x_{\sigma \tau},
\end{equation}
for all $m \in M$, $\sigma, \tau \in G$. This way $\bigoplus_{\sigma
\in G} M x_{\sigma}$ becomes an associative central simple
$F$-algebra that contains a maximal subfield isomorphic to $M$. This
algebra is denoted by $(M,G,\mathfrak{a})$ and is a $G$-crossed
product algebra over $F$.
 If $G$ is solvable then $A$ is also called a  \emph{solvable $G$-crossed product}.

In the following, we will only consider unital algebras $A$ over a field $F$ which are  finite-dimensional
  without explicitly saying so.
We denote the set of invertible elements of $A$ by $A^{\times}$.

\section{Cyclic and crossed product subalgebras of central simple algebras}\label{sec:aux}

In this section, let $M/F$ be a field extension of degree $n$, and
$G={\rm Aut}_F(M)$ the group of automorphisms of $M$ which fix the
elements of $F$.  Let $A$ be a central simple algebra of degree $n$
over $F$ and suppose that $M$ is contained in $A$, i.e. is a maximal subfield of $A$.

The  results in this section are stated for central division
algebras $A$ over $F$ for instance in \cite{P66}, and none of them are proved there.
 We generalize them  to any central simple algebra $A$ with a maximal
subfield $M$ as above, so that \cite[(26)]{P66} which is well known for Galois extensions $M/F$ becomes:

\begin{lemma} \label{lem:Petit (26)}
(i) For any $\sigma \in G$ there exists an invertible $x_{\sigma} \in
A $ such that
 the inner automorphism
 $$I_{x_{\sigma}}: A \rightarrow A, \ y \mapsto x_{\sigma} y x_{\sigma}^{-1}$$
 restricted to $M$ is $\sigma$.
\\ (ii) Given any $\sigma \in G$, we have
$$\{ x \in A^\times  \ | \ I_{x} \vert_M = \sigma \} =
M^{\times}x_{\sigma}.$$
 (iii) The set of cosets $\{ M^{\times} x_{\sigma} \ \vert \ \sigma \in G\}$ together with the multiplication given by
\begin{equation} \label{eqn:M^X x_sigma M^times x_tau = M^times x_sigma tau}
M^{\times} x_{\sigma} M^{\times} x_{\tau} = M^{\times} x_{\sigma
\tau}
\end{equation}
is a group isomorphic to $G$, where $\sigma$ and $M^{\times}
x_{\sigma}$ correspond under this isomorphism.
\end{lemma}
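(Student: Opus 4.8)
The plan is to prove the three parts in order, since each builds on the previous one.

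For part (i), the key tool is the Skolem–Noether theorem. Given $\sigma \in G = \mathrm{Aut}_F(M)$, I first observe that $\sigma: M \to M$ is an $F$-algebra isomorphism from $M$ onto itself, hence an $F$-algebra embedding of $M$ into the central simple algebra $A$. The inclusion $M \hookrightarrow A$ is another such embedding. By Skolem–Noether, any two $F$-algebra homomorphisms from a simple subalgebra into a central simple algebra $A$ are conjugate by an inner automorphism of $A$. Therefore there is an invertible $x_\sigma \in A^\times$ with $x_\sigma m x_\sigma^{-1} = \sigma(m)$ for all $m \in M$, i.e. $I_{x_\sigma}\vert_M = \sigma$. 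The mild subtlety here is that $A$ need not be a division algebra, but Skolem–Noether holds for all central simple algebras, so the argument goes through unchanged.

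For part (ii), the inclusion $M^\times x_\sigma \subseteq \{x \in A^\times \mid I_x\vert_M = \sigma\}$ is immediate: for $m \in M^\times$, the element $m x_\sigma$ is invertible and $I_{m x_\sigma}\vert_M = I_m\vert_M \circ I_{x_\sigma}\vert_M = \mathrm{id}_M \circ \sigma = \sigma$, using that $M$ is commutative so $I_m$ fixes $M$ pointwise. For the reverse inclusion, suppose $x \in A^\times$ satisfies $I_x\vert_M = \sigma = I_{x_\sigma}\vert_M$. Then $I_{x_\sigma^{-1} x}\vert_M = \mathrm{id}_M$, so $x_\sigma^{-1} x$ centralizes $M$ in $A$. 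Here I invoke the centralizer theorem: since $M$ is a maximal subfield of the degree-$n$ central simple algebra $A$ with $[M:F]=n$, $M$ is its own centralizer $C_A(M) = M$. Hence $x_\sigma^{-1} x \in M^\times$ (it lies in $M$ and is invertible), giving $x \in x_\sigma M^\times = M^\times x_\sigma$, where the last equality again uses commutativity of $M$.

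For part (iii), I would first check the multiplication (\ref{eqn:M^X x_sigma M^times x_tau = M^times x_sigma tau}) is well defined and correct. From $x_\sigma m = \sigma(m) x_\sigma$ one computes that $x_\sigma x_\tau$ implements $\sigma\tau$ on $M$, so by part (ii), $x_\sigma x_\tau \in M^\times x_{\sigma\tau}$, whence $M^\times x_\sigma M^\times x_\tau = M^\times x_{\sigma\tau}$. This shows the set of cosets is closed under the product, with identity $M^\times x_{\mathrm{id}} = M^\times$ and inverse $M^\times x_{\sigma^{-1}}$; associativity is inherited from $A$. The map $\sigma \mapsto M^\times x_\sigma$ is then a homomorphism onto $\{M^\times x_\sigma \mid \sigma \in G\}$, and it is injective because $M^\times x_\sigma = M^\times x_\tau$ forces $\sigma = I_{x_\sigma}\vert_M = I_{x_\tau}\vert_M = \tau$ by part (ii). I expect the main obstacle to be the bookkeeping in part (ii): one must be careful that the centralizer argument genuinely produces an element of $M$ and that invertibility in $A$ restricts to invertibility in the field $M$, rather than something lying in a larger centralizer — this is exactly where maximality of $M$ (so that $C_A(M)=M$) is essential.
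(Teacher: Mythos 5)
Your proposal is correct and follows essentially the same route as the paper: Skolem--Noether for (i), the fact that a maximal subfield is self-centralizing ($C_A(M)=M$) for (ii), and the coset computation combined with (ii) for (iii). One small repair: at the end of (ii) the equality $x_\sigma M^\times = M^\times x_\sigma$ does not follow from commutativity of $M$ (since $x_\sigma \notin M$), but from the intertwining relation $x_\sigma m x_\sigma^{-1} = \sigma(m)$ together with surjectivity of $\sigma$, which gives $x_\sigma M x_\sigma^{-1} = M$; alternatively, factor on the other side as the paper does, writing $x = v x_\sigma$ with $v = x x_\sigma^{-1}$ and showing $v \in M^\times$ directly, so that $x \in M^\times x_\sigma$ without this extra step.
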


\begin{proof}
 (i) By the Theorem of Skolem-Noether,
there exists $x_{\sigma} \in A^\times $ such that $I_{x_{\sigma}} \vert_M = \sigma$.
\\ (ii) We have
$$I_{(m x_{\sigma})}(y) = (m x_{\sigma}) y (m x_{\sigma})^{-1} =
(m x_{\sigma}) y (x_{\sigma}^{-1} m^{-1}) = m \sigma(y) m^{-1} =
\sigma(y),$$ for all $m, y \in M^{\times}$, and thus $M^{\times}
x_{\sigma} \subset \{ x \in A^\times  \ | \ I_{x} \vert_M =
\sigma \}$.

Suppose $u \in \{ x \in A^\times  \ \vert \ I_{x} \vert_M =
\sigma \}$. As $u$ and $x_{\sigma}$ are invertible, we can write
$u = v x_{\sigma}$ for some $v \in A^\times $. We still have to prove
that $v \in
M^{\times}$.
We have $$\sigma(y) = I_{u}(y) = (v x_{\sigma}) y (v
x_{\sigma})^{-1} = v x_{\sigma} y x_{\sigma}^{-1} v^{-1} = v
\sigma(y) v^{-1},$$ for all $y \in M$, and so $\sigma(y) v = v
\sigma(y)$ for all $y \in M$, that is $m v= v m$ for all $m \in M$
since $\sigma$ is bijective. Therefore $v$ is contained in the
centralizer of $M$ in $A$, which is equal to $M$
because
$M$ is a maximal subfield of $A$.
\\ (iii) Let $s=m_1 x_{\sigma} m_2 x_{\tau} \in M^{\times} x_{\sigma} M^{\times} x_{\tau}$ for some
$m_1, m_2 \in M^{\times}$, $\sigma, \tau \in G$. Then
\begin{align*}
I_{s} (y)&= (m_1 x_{\sigma} m_2
x_{\tau}) y (m_1 x_{\sigma} m_2 x_{\tau})^{-1} \\ &= m_1 x_{\sigma}
m_2 (x_{\tau} y x_{\tau}^{-1}) m_2^{-1} x_{\sigma}^{-1} m_1^{-1} \\
&= m_1 x_{\sigma} (m_2 \tau(y) m_2^{-1}) x_{\sigma}^{-1} m_1^{-1} \\
&= m_1 (x_{\sigma} \tau(y) x_{\sigma}^{-1}) m_1^{-1} \\ &= m_1
\sigma(\tau(y)) m_1^{-1} \\ &= \sigma(\tau(y)) = \sigma \tau (y),
\end{align*}
for all $y \in M$, which means $I_{s}$ restricts to $\sigma \tau$ on $M$. Therefore $m_1
x_{\sigma} m_2 x_{\tau} \in M^{\times} x_{\sigma \tau}$ by (ii) and
so $M^{\times} x_{\sigma} M^{\times} x_{\tau} \subseteq M^{\times}
x_{\sigma \tau}$. In particular, we get $x_{\sigma}x_{\tau} = m
x_{\sigma \tau}$ for some $m \in M^{\times}$. Thus $$l x_{\sigma\tau}
= l m^{-1} x_{\sigma}x_{\tau} \in M^{\times} x_{\sigma}M^{\times}
x_{\tau}$$ for all $l \in M^{\times}$, i.e. $M^{\times} x_{\sigma
\tau} \subseteq M^{\times} x_{\sigma} M^{\times} x_{\tau}$, and hence
$M^{\times} x_{\sigma \tau} = M^{\times} x_{\sigma} M^{\times}
x_{\tau}$.

Finally, the map $\{ M^{\times} x_{\sigma} \ \vert \ \sigma \in G \}
\rightarrow G, \ M^{\times} x_{\sigma} \mapsto \sigma,$ is clearly
bijective and is multiplicative by \eqref{eqn:M^X x_sigma M^times
x_tau = M^times x_sigma tau} which yields the assertion.
\end{proof}

The following  generalizes \cite[(27)]{P66} to central simple
algebras with a maximal subfield $M$ as above. The result was
again  only stated for division algebras and also not in terms of crossed product algebras:

\begin{theorem} \label{lem:Petit (27)}
 (i) $A$ contains a subalgebra $M(G)$ which is a crossed product algebra $(M,G,\mathfrak{a})$ of degree
 $|G|$ over ${\rm Fix}(G)$ with maximal subfield $M$.
\\ (ii)  $A=M(G)$ if and only if $M$ is a Galois field extension of $F$.
In that case,  $A$ is a $G$-crossed product algebra over $F$.
\\ (iii)  For any subgroup $H$ of  $G$, there is a subalgebra $M(H)$ of both $M(G)$  and $A$
which is a $H$-crossed product algebra of degree $|H|$ over ${\rm Fix}(H)$  with maximal subfield $M$.
\end{theorem}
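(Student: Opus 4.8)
The plan is to realise the abstract crossed product directly inside $A$ using the elements $x_\sigma$ produced by Lemma~\ref{lem:Petit (26)}. Write $F_0 = {\rm Fix}(G)$. Since $G$ is a finite group of automorphisms of $M$, Artin's theorem guarantees that $M/F_0$ is Galois with Galois group $G$ and $[M:F_0] = |G|$. For each $\sigma \in G$ fix an invertible $x_\sigma \in A$ with $I_{x_\sigma}\vert_M = \sigma$, as in Lemma~\ref{lem:Petit (26)}(i); by Lemma~\ref{lem:Petit (26)}(iii) we have $x_\sigma x_\tau \in M^\times x_{\sigma\tau}$, so there are elements $a_{\sigma,\tau} \in M^\times$ with $x_\sigma x_\tau = a_{\sigma,\tau} x_{\sigma\tau}$. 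I then set $M(G) = \sum_{\sigma \in G} M x_\sigma \subseteq A$.

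First I would check that $M(G)$ is a subalgebra: the relation $x_\sigma m = \sigma(m) x_\sigma$ (which is just $I_{x_\sigma}\vert_M = \sigma$) together with $x_\sigma x_\tau = a_{\sigma,\tau} x_{\sigma\tau}$ shows that a product $(m_1 x_\sigma)(m_2 x_\tau) = m_1 \sigma(m_2) a_{\sigma,\tau} x_{\sigma\tau}$ again lies in $M x_{\sigma\tau}$, so $M(G)$ is closed under multiplication, and it inherits associativity from $A$. The key step, which I expect to be the main obstacle, is to show the $x_\sigma$ are linearly independent over $M$, so that the sum is direct. This is the Dedekind--Artin independence argument: from a shortest nontrivial relation $\sum_\sigma m_\sigma x_\sigma = 0$ one multiplies on the right by a suitable $y \in M$, uses $x_\sigma y = \sigma(y) x_\sigma$ to obtain $\sum_\sigma m_\sigma \sigma(y) x_\sigma = 0$, and subtracts $\sigma_0(y)$ times the original relation for a fixed nonzero index $\sigma_0$; choosing $y$ with $\sigma_0(y) \neq \tau(y)$ for another nonzero index $\tau$ yields a strictly shorter nontrivial relation, a contradiction. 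With independence in hand, $M(G) = \bigoplus_{\sigma \in G} M x_\sigma$ and $\dim_{F_0} M(G) = |G|\,[M:F_0] = |G|^2$.

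Next I would verify that $\mathfrak{a}\colon (\sigma,\tau) \mapsto a_{\sigma,\tau}$ is a factor set, i.e. satisfies \eqref{eqn:Crossed product criteria 1}: expanding $(x_\sigma x_\tau) x_\rho = x_\sigma (x_\tau x_\rho)$ via the two defining relations and comparing coefficients of $x_{\sigma\tau\rho}$ produces exactly the $2$-cocycle identity. Hence, by the construction recalled in Section~\ref{sec:prel}, $M(G)$ coincides with the crossed product $(M,G,\mathfrak{a})$, a central simple algebra over $F_0 = {\rm Fix}(G)$ of degree $|G|$ in which $M$ is a maximal subfield (since $[M:F_0] = |G|$). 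This proves (i). For (ii) I would argue by dimension count: $\dim_F M(G) = |G|\,[M:F] = |G|\,n$ while $\dim_F A = n^2$, so the inclusion $M(G) \subseteq A$ is an equality precisely when $|G| = n = [M:F]$. As $|{\rm Aut}_F(M)| = [M:F]$ holds if and only if $M/F$ is Galois, we obtain $A = M(G)$ iff $M/F$ is Galois; in that case ${\rm Fix}(G) = F$, so $A = (M,G,\mathfrak{a})$ is a $G$-crossed product over $F$.

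Finally, for (iii) I would repeat the construction of (i) verbatim over a subgroup $H \leq G$: the same elements $\{x_\sigma : \sigma \in H\}$ span $M(H) = \bigoplus_{\sigma \in H} M x_\sigma$, which is closed under multiplication because $H$ is a subgroup and therefore sits inside both $M(G)$ and $A$. Using that $M/{\rm Fix}(H)$ is Galois with group $H$ by Artin's theorem, and that $\mathfrak{a}$ restricts to a factor set on $H \times H$, the same argument identifies $M(H)$ with the $H$-crossed product $(M,H,\mathfrak{a}\vert_{H\times H})$ of degree $|H|$ over ${\rm Fix}(H)$ with maximal subfield $M$.
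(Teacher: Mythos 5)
Your proposal is correct and follows essentially the same route as the paper's proof: realising $M(G)$ as $\bigoplus_{\sigma\in G} M x_\sigma$ inside $A$ via Lemma~\ref{lem:Petit (26)}, proving independence of the $x_\sigma$ by the minimal-relation (Dedekind) argument, extracting the factor set from associativity, and settling (ii) and (iii) by the dimension count over $F$ and by restricting the construction to a subgroup $H$. The only cosmetic differences are that you invoke Artin's theorem by name where the paper states the Galois property of $M/{\rm Fix}(G)$ directly, and that the paper explicitly notes $1\in M^\times x_{\rm id}\subset M(G)$, a detail worth including.
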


\begin{proof}
(i)  There is an $F$-subalgebra $M(G)$ of $A$ admitting a basis $\{ x_{\sigma} \ \vert \ \sigma \in G \}$ as a
 vector space over $M$:
Let $M(G)$ denote the subset of $A$ which is generated as an
$M$-vector space by $\{ x_{\sigma} \ \vert \ \sigma \in G \}$. Note
that $1 \in M^{\times} x_{id} \subset M(G)$ and also $x_{\sigma}
x_{\tau} \in M^{\times} x_{\sigma \tau}$ for all $\sigma, \tau \in G$
by Lemma \ref{lem:Petit (26)}. In particular, there exist
$a_{\sigma,\tau} \in M^{\times}$ such that
\begin{equation} \label{eqn:M(G) rules (1)}
x_{\sigma} x_{\tau} = a_{\sigma,\tau} x_{\sigma \tau}
\end{equation}
holds for all $\sigma, \tau \in G$. Therefore $M(G)$ is closed under
multiplication, contains the identity, and can easily be seen to be
an $F$-subalgebra of $A$.

Furthermore, $\sigma(m) =
I_{x_{\sigma}}(m) = x_{\sigma} m x_{\sigma}^{-1}$ for all $m \in
M$ by Lemma \ref{lem:Petit (26)} which yields
\begin{equation} \label{eqn:M(G) rules (2)}
x_{\sigma} m = \sigma(m) x_{\sigma},
\end{equation}
for all $m \in M, \sigma, \tau \in G$.

The set $\{ x_{\sigma} \ \vert \ \sigma \in G \}$ is
linearly independent over $M$:
Suppose
\begin{equation} \label{eqn:linearly independent Petit (27)}
\sum_{\sigma \in G} m_{\sigma} x_{\sigma} = 0
\end{equation}
for some $m_{\sigma} \in M$, not all $0$, where the sum
\eqref{eqn:linearly independent Petit (27)} is chosen so that the
number of non-zero $m_{\sigma}$ is minimal. Let $\tau \in G$ be such
that $m_{\tau} \neq 0$, then
\begin{equation} \label{eqn:linearly independent Petit (27) 2}
0 = \big( \sum_{\sigma \in G} m_{\sigma} x_{\sigma} \big) m - \tau(m)
\big( \sum_{\sigma \in G} m_{\sigma} x_{\sigma} \big) = \sum_{\sigma
\in G} m_{\sigma}(\sigma(m) - \tau(m)) x_{\sigma},
\end{equation}
for all $m \in M$ by \eqref{eqn:M(G) rules (2)} and
\eqref{eqn:linearly independent Petit (27)}. The coefficient of
$x_{\tau}$ in \eqref{eqn:linearly independent Petit (27) 2} is $0$,
so by the minimality of \eqref{eqn:linearly independent Petit (27)}
we obtain $$m_{\sigma}(\sigma(m) - \tau(m)) = 0$$ for all $\sigma \in
G$. This means $\sigma = \tau$ for all $\sigma \in G$ with
$m_{\sigma} \neq 0$, a contradiction, so we proved linear independency.

$M/\mathrm{Fix}(G)$ is a Galois field extension of degree $| G |$,
and the associativity of $M(G)$ implies in particular
$(x_{\sigma} x_{\tau}) x_{\rho} = x_{\sigma} (x_{\tau} x_{\rho})$ for all $\sigma, \tau, \rho \in G$. This means
$$a_{\sigma,\tau} a_{\sigma \tau, \rho} = a_{\sigma, \tau \rho} \sigma(a_{\tau, \rho}),$$
for all $\sigma, \tau, \rho \in G$. Therefore the constants $a_{\sigma,\tau}$ define a factor set
$$\mathfrak{a}: G \times G \rightarrow M^{\times}, \ (\sigma,\tau) \mapsto a_{\sigma,\tau}$$
 of $G$ and hence
 $M(G)$ is the $G$-crossed product algebra $(M,G,\mathfrak{a})$ over $\mathrm{Fix}(G)$ of degree $|G|$.
\\ (ii) We have $[M:F] = n$ and $A$ has dimension
$n^2$ over $F$. If $M$ is not a Galois extension of $F$, then
$\vert G \vert < n$ and thus $\{
x_{\sigma} \ \vert \ \sigma \in G \}$ cannot be a  set of generators
for $A$ as a vector space over $M$. Conversely, if $M/F$ is a Galois
extension, then $\vert G \vert = n$ and since $\{ x_{\sigma} \ \vert
\ \sigma \in G \}$ is linearly independent over $M$, counting
dimensions yields $M(G) = A$. The rest of the assertion is trivial.
\\ (iii)   For any subgroup $H$ of  $G$, there is an $F$-subalgebra $M(H)$ of $M(G)$ with basis
$\{ x_{\sigma} \ \vert \ \sigma \in H \}$ as a  vector space over $M$
and multiplication in $M(H)$ defined by constants $a_{\sigma,\tau}
\in M^{\times}$ for all $m \in M$, $\sigma, \tau \in H$ according to
the rules in (i):
clearly $M(H)$ is closed under multiplication since if $x_{\sigma}, x_{\tau} \in M(H)$,
then $\sigma, \tau \in H$, hence also $\sigma \tau \in H$ and so
$x_{\sigma \tau} \in M(H)$. Additionally $1 \in M^{\times} x_{id}
\subset M(H)$, and thus  $M(H)$ is a subalgebra of $M(G)$.
$M/\mathrm{Fix}(H)$ is a Galois field extension of degree $| H|$ and with the same argument as in
the proof of (i) thus  $M(H)$ is a $H$-crossed product algebra over $\mathrm{Fix}(H)$ of degree $|H|$.
\end{proof}

More precisely, a closer look at the above proof reveals:

\begin{lemma} \label{prop:Petit (28)}
(i) For any subgroup $H$ of $G$,
$M(H)$ is a $H$-crossed product algebra over its center with
$$M(H)=(M,H,\mathfrak{a}_H),$$
 where $\mathfrak{a}_H$ denotes the
factor set $\mathfrak{a}$ of the crossed product algebra $M(G)=(M,G,\mathfrak{a})$, restricted to the elements in $H$.
\\ (ii) If $H$ is a cyclic subgroup of $G$ of order $h>1$ generated by
$\sigma\in G$, then there exists $c \in {\rm Fix}(\sigma)^\times$ such that
$$M(H)\cong M[t;\sigma]/M[t;\sigma](t^h - c)$$
is a cyclic algebra of degree $h$ over ${\rm Fix}(\sigma)$ and an $F$-subalgebra of $A$.
\end{lemma}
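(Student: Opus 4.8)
For part (i), I would simply unwind the construction carried out in the proof of Theorem \ref{lem:Petit (27)}. There it was shown that $M(H)$ is an $H$-crossed product algebra over ${\rm Fix}(H)$ with $M$-basis $\{ x_{\sigma} \mid \sigma \in H \}$, and that its multiplication is governed by the constants $a_{\sigma,\tau} \in M^{\times}$ determined by $x_{\sigma} x_{\tau} = a_{\sigma,\tau} x_{\sigma\tau}$. Since the basis vectors $x_{\sigma}$ (for $\sigma \in H$) are literally the same elements of $A$ used to build $M(G)$, these constants are exactly the values $a_{\sigma,\tau}$ of the factor set $\mathfrak{a}$ of $M(G) = (M,G,\mathfrak{a})$ with $\sigma,\tau \in H$. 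Hence the factor set of $M(H)$ is $\mathfrak{a}$ restricted to $H \times H$, i.e. $M(H) = (M,H,\mathfrak{a}_H)$.

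For part (ii), write $H = \langle \sigma \rangle$ with $\sigma$ of order $h$, and fix an invertible $x_{\sigma} \in A$ with $I_{x_{\sigma}}\vert_M = \sigma$ as provided by Lemma \ref{lem:Petit (26)}. The plan is to show that the single element $x_{\sigma}$ already generates $M(H)$ over $M$ and behaves like the image of $t$. First I would compute $x_{\sigma}^h$: since $I_{x_{\sigma}^h}\vert_M = \sigma^h = \mathrm{id}$, the element $x_{\sigma}^h$ centralizes $M$, and because $M$ is a maximal subfield of $A$ its centralizer is $M$ itself, so $x_{\sigma}^h = c$ for some $c \in M^{\times}$. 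The key point is then that $c \in {\rm Fix}(\sigma)$: comparing $x_{\sigma} c = \sigma(c) x_{\sigma}$ (from \eqref{eqn:M(G) rules (2)}) with $x_{\sigma} c = x_{\sigma}^{h+1} = c\,x_{\sigma}$ forces $\sigma(c) = c$. I expect this identification of $c$ as an element of ${\rm Fix}(\sigma)^{\times}$ to be the main obstacle, since it is exactly where the order of $\sigma$ and the maximality of $M$ enter; the remaining steps are essentially bookkeeping.

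With $c \in {\rm Fix}(\sigma)^{\times}$ in hand, I would first check that $t^h - c$ is an invariant polynomial in $M[t;\sigma]$ --- in fact it is central, as $t^h$ commutes with $M$ (because $\sigma^h = \mathrm{id}$) and $t$ commutes with $c$ (because $\sigma(c) = c$) --- so that the quotient $M[t;\sigma]/M[t;\sigma](t^h - c)$ is a genuine cyclic algebra $(M/{\rm Fix}(\sigma), \sigma, c)$ of degree $h$ over ${\rm Fix}(\sigma)$, the extension $M/{\rm Fix}(\sigma)$ being cyclic Galois of degree $h$ with group $\langle \sigma \rangle$. I would then define $\phi$ on this quotient by sending the class of $t$ to $x_{\sigma}$ and fixing $M$ elementwise; the commutation relation \eqref{eqn:M(G) rules (2)} shows $\phi$ respects $tm = \sigma(m)t$, and $x_{\sigma}^h = c$ shows it kills $t^h - c$, so $\phi$ is a well-defined ${\rm Fix}(\sigma)$-algebra homomorphism. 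Finally, to see that $\phi$ is an isomorphism I would count dimensions: both sides have dimension $h$ over $M$, and writing $x_{\sigma}^i = b_i x_{\sigma^i}$ with $b_i \in M^{\times}$ (valid by Lemma \ref{lem:Petit (26)} together with the invertibility of $x_{\sigma}$) exhibits $\{ x_{\sigma}^i \mid 0 \le i \le h-1 \}$ as an $M$-basis of $M(H)$, so $\phi$ carries a basis to a basis and is bijective. Since $M(H)$ is an $F$-subalgebra of $A$ by Theorem \ref{lem:Petit (27)}, this yields the claim.
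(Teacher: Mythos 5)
Your proposal is correct, and it diverges from the paper only in part (ii), where the paper's proof is essentially a citation while yours is a self-contained construction. For (i) you and the paper agree: the claim is an immediate unwinding of the proof of Theorem \ref{lem:Petit (27)} (the paper simply declares it trivial). For (ii) the paper argues in two lines: $M(H)$ is an $H$-crossed product algebra over ${\rm Fix}(\sigma)$ by Theorem \ref{lem:Petit (27)}, and $H$ is cyclic, so $M(H)$ is a cyclic algebra $M[t;\sigma]/M[t;\sigma](t^h-c)$ by the standard fact cited from \cite[p.~19]{J96}. You instead prove that standard fact in situ: you identify $c = x_{\sigma}^h$, place it in $M^{\times}$ via the centralizer argument (maximality of $M$), deduce $c \in {\rm Fix}(\sigma)^{\times}$ from $\sigma(c)x_{\sigma} = x_{\sigma}c = c\,x_{\sigma}$, verify that $t^h - c$ is central in $M[t;\sigma]$, and realize the isomorphism explicitly by $t \mapsto x_{\sigma}$, with bijectivity coming from the basis identification $x_{\sigma}^i = b_i x_{\sigma^i}$ granted by Lemma \ref{lem:Petit (26)}(ii); every step is sound. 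What your route buys is an explicit witness for $c$, namely $c = x_{\sigma}^h$, rather than a bare existence statement --- and this is in fact exactly how the paper itself proceeds one level up: in the proof of Theorem \ref{thm:Petit (29)} the element $c_1 = x_{\sigma_2}^{q_1}$ is defined and shown to lie in ${\rm Fix}(\tau_1)^{\times}$ by the same computations you give. The paper's version is shorter; yours makes the lemma independent of the general theory of cyclic crossed product algebras and foreshadows the inductive construction used later.
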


\begin{proof}
(i) is trivial.
\\ (ii) $M(H)$ is a $H$-crossed product algebra $(M,H,\mathfrak{a}_H)$ over $\mathrm{Fix}(\sigma)$ of degree
$h$ by Theorem \ref{lem:Petit (27)} and $H$ is a cyclic group, therefore $M(H)$ is a cyclic algebra over
$\mathrm{Fix}(\sigma)$ of
degree $h$, i.e. there exists $c \in {\rm Fix}(\sigma)^\times$ such that
$M(H)\cong M[t;\sigma]/M[t;\sigma](t^h - c)$, e.g. see \cite[p. 19]{J96}.
\end{proof}

We conclude that even if a central division algebra $A$ over $F$ is a noncrossed product,
if $A$ contains a maximal field extension $M$  with a non-trivial $\sigma\in G = {\rm Aut}_F(M)$ of order $h$,
then it contains a cyclic division algebra of degree $h$ (though generally not with center $F$):

\begin{theorem} \label{cor:importantI}
Let $A$ be a central division algebra over $F$ with maximal subfield $M$ and non-trivial $\sigma\in G = {\rm Aut}_F(M)$
of order $h$. Then  $A$ contains the cyclic division algebra
$$(M/{\rm Fix} (\sigma), \sigma, c)=M[t;\sigma]/M[t;\sigma](t^h - c)$$
 of degree $h$ over ${\rm Fix} (\sigma)$ as an $F$-subalgebra.
\end{theorem}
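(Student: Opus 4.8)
The plan is to specialize Lemma~\ref{prop:Petit (28)}(ii) to the cyclic subgroup generated by $\sigma$, and then to upgrade the resulting cyclic algebra to a division algebra using the hypothesis that $A$ itself is a division algebra. Since $\sigma$ has order $h$, the subgroup $H = \langle \sigma \rangle$ of $G$ is cyclic of order $h > 1$, so Lemma~\ref{prop:Petit (28)}(ii) applies directly.

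First I would invoke Lemma~\ref{prop:Petit (28)}(ii) to obtain some $c \in {\rm Fix}(\sigma)^\times$ for which $M(H) \cong M[t;\sigma]/M[t;\sigma](t^h - c)$ is a cyclic algebra of degree $h$ over ${\rm Fix}(\sigma)$, realized as an $F$-subalgebra of $A$. This is exactly the algebra $(M/{\rm Fix}(\sigma), \sigma, c)$ appearing in the statement, so the only point left to establish is that it is a \emph{division} algebra, rather than merely a cyclic algebra.

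For this last point I would argue that a finite-dimensional subalgebra of a division algebra is again a division algebra: as $M(H)$ is a subalgebra of the division algebra $A$, it inherits the absence of zero divisors, and a finite-dimensional algebra over a field without zero divisors is a division algebra, since left and right multiplication by any nonzero element are then injective, hence bijective. Thus $M(H) = (M/{\rm Fix}(\sigma), \sigma, c)$ is a cyclic division algebra of degree $h$ over ${\rm Fix}(\sigma)$ sitting inside $A$. The heavy lifting is already carried out in Lemma~\ref{prop:Petit (28)}(ii); the only genuinely new ingredient is this elementary passage from ``subalgebra of a division algebra'' to ``division algebra,'' so I do not anticipate any real obstacle beyond correctly combining these two facts.
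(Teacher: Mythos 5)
Your proposal is correct and takes essentially the same approach as the paper: the paper presents Theorem \ref{cor:importantI} as an immediate consequence of Lemma \ref{prop:Petit (28)}(ii) (``We conclude that\dots''), with the division property of $M(H)$ coming, exactly as you argue, from the fact that a finite-dimensional subalgebra of a division algebra has no zero divisors and is therefore itself a division algebra. There is no gap; your only added ingredient is making that last elementary step explicit, which the paper leaves tacit.
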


This generalizes  \cite[(28)]{P66} to central simple algebras with a maximal subfield $M$ such that $G={\rm Aut}_F(M)$ is not trivial.

\begin{remark}
The question when a central division algebra $A$ over $F$ has a cyclic subalgebra of prime degree was recently
 raised in \cite[Question 1]{M}. If $F$ is a Henselian field such that $\overline F$
is a global field, and $A$  is an central division algebra over $F$ such that ${\rm char}(\overline F)$ does not divide
$\deg (A),$ then $A$ contains a cyclic division algebra of prime degree  \cite[Theorem 3]{M}.

By Theorem \ref{cor:importantI}, any central division algebra $A$ over $F$ containing a maximal subfield $M$ with some
 $\sigma\in G = {\rm Aut}_F(M)$ of prime order $p$ contains a cyclic division algebra of prime degree $p$.
\end{remark}

 A central division algebra of prime degree over $F$ is cyclic if and only if it has a cyclic subalgebra
of prime degree (not necessarily with center $F$) \cite[p.~2]{M}. Theorem \ref{cor:importantI}  yields
the following observations:

\begin{corollary} \label{cor:importantII}
Let $A$ be a central division algebra over $F$.
\\ (i)  If $A$ has prime degree
then either $A$ is a cyclic algebra or each of its maximal subfields $M$ has trivial automorphism group $ {\rm Aut}_F(M)$.
\\
(ii) Suppose $A$ contains a maximal subfield $M$ such that $G={\rm Aut}_F(M)$
is non-trivial. Then $A$ contains the $G$-crossed product division
algebra $M(G)=(M,G,\mathfrak{a})$ of degree $| G |$ over ${\rm Fix}(G)$
as a subalgebra.
\end{corollary}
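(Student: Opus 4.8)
The plan is to derive both parts directly from Theorem \ref{lem:Petit (27)} and Theorem \ref{cor:importantI}, combined with two standard facts: first, that for a finite extension $M/F$ and any $\sigma \in {\rm Aut}_F(M)$ of order $h$, the extension $M/{\rm Fix}(\sigma)$ is cyclic Galois of degree $h$ (Artin's theorem), so that $h = [M:{\rm Fix}(\sigma)]$ divides $[M:F]$; and second, that every finite-dimensional unital $F$-subalgebra of a division algebra is itself a division algebra, since it is a domain that is finite-dimensional over a field.

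For part (i), I would argue by contraposition: assume $A$ has prime degree $p$ and that some maximal subfield $M$ has a non-trivial automorphism $\sigma \in {\rm Aut}_F(M)$, and show that $A$ must then be cyclic. Since $A$ is a central division algebra of degree $p$, each maximal subfield satisfies $[M:F] = p$. Let $h > 1$ be the order of $\sigma$. By Artin's theorem $M/{\rm Fix}(\sigma)$ is Galois of degree $h$, so $h = [M:{\rm Fix}(\sigma)]$ divides $[M:F] = p$; as $h > 1$ this forces $h = p$ and ${\rm Fix}(\sigma) = F$. Hence $M/F$ is Galois with cyclic group $\langle \sigma \rangle \cong \mathbb{Z}_p$, and Theorem \ref{lem:Petit (27)}(ii) gives $A = M(G) = (M, G, \mathfrak{a})$ as a $G$-crossed product over $F$; since $G$ is cyclic, this exhibits $A$ as a cyclic algebra, completing the contrapositive.

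For part (ii), I would invoke Theorem \ref{lem:Petit (27)}(i) directly: because $G = {\rm Aut}_F(M)$ is non-trivial, $A$ contains the subalgebra $M(G) = (M, G, \mathfrak{a})$, a $G$-crossed product algebra of degree $|G|$ over ${\rm Fix}(G)$ with maximal subfield $M$. It then only remains to upgrade ``algebra'' to ``division algebra''. Since $M(G)$ is a finite-dimensional unital $F$-subalgebra of the division algebra $A$, it contains no zero divisors, and a finite-dimensional associative unital algebra over a field without zero divisors is a division algebra; therefore $M(G)$ is the asserted $G$-crossed product division algebra.

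The arguments are short because the structural work has already been carried out in the preceding results; the only point requiring care is the prime-degree reduction in (i), where one must be sure that the divisibility $h \mid [M:F]$ together with $h > 1$ genuinely forces ${\rm Fix}(\sigma) = F$, i.e. that $M/F$ is Galois. It is precisely this step that lets us apply Theorem \ref{lem:Petit (27)}(ii) and conclude full cyclicity of $A$, rather than merely producing a proper cyclic subalgebra via Theorem \ref{cor:importantI}.
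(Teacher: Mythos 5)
Your proposal is correct and follows essentially the same route as the paper: both proofs use the primality of $\deg(A)$ to pin down the intermediate field (the paper via $[M:{\rm Fix}(G)]=|G|\in\{1,p\}$, you via Artin's theorem forcing $[M:{\rm Fix}(\sigma)]=p$), conclude that $M/F$ is cyclic Galois so that Theorem \ref{lem:Petit (27)}(ii) yields $A=M(G)$ cyclic, and obtain (ii) directly from Theorem \ref{lem:Petit (27)}(i) plus the standard fact that a finite-dimensional subalgebra of a division algebra is a division algebra (a step the paper dismisses as trivial). The only cosmetic differences are your use of contraposition and of a single automorphism $\sigma$ in place of the full group $G$.
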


\begin{proof}
(i) If $G= {\rm Aut}_F(M)$ is non-trivial, then $A$ contains the cyclic algebra $M(G)$ of degree $|G|\leq p$ over
$F_0={\rm Fix}(G)$ as subalgebra. Since $M/F_0$ is a maximal subfield of $M(G)$, it also has degree $|G|$.
Looking at the possible degrees of the intermediate field extensions of $M/F$ we have
$[M:F_0]=1$ or $[M:F_0]=p$, so $|G|=1$ or $|G|=p$.
If $|G|=p$ then $A$ is a cyclic algebra. Hence if $A$ is not cyclic then
each of its maximal subfields $M$ must have  trivial automorphism group ${\rm Aut}_F(M) $.
\\ (ii) is trivial.
\end{proof}

\section{Central simple algebras containing maximal subfields with a solvable $F$-automorphism group} \label{sec:main}

Let $G$ be finite a solvable group, i.e. there exists a chain of subgroups
\begin{equation} \label{eqn:Subnormal series}
\{ 1 \} = G_0 \leq G_1 \leq \ldots \leq G_k = G,
\end{equation}
such that $G_{j}$ is normal in $G_{j+1}$ and $G_{j+1}/G_{j}$ is cyclic of
prime order $q_{j}$ for all $j \in \{ 0, \ldots,k-1 \}$, that is
\begin{equation} \label{eqn:proof petit (29) 1}
G_{j+1}/G_{j}  = \{ G_{j},G_{j} \sigma_{j+1}, \dots \},
\end{equation}
for some
$\sigma_{j+1} \in G_{j+1}$.
 Lemma \ref{lem:Petit (26)}, Theorem \ref{lem:Petit (27)} and Corollary \ref{prop:Petit (28)} yield
the following generalization of \cite[(29)]{P66}, which only claims the result for central division algebras over $F$:

\begin{theorem} \label{thm:Petit (29)}
Let $M/F$ be a field extension of degree $n$ with non-trivial solvable $G = {\rm
Aut}_F(M)$, and $A$  a central simple algebra of degree
$n$ over $F$ with maximal subfield $M$. Then there exists a chain of
subalgebras
\begin{equation} \label{eqn:Petit (29) chain of subalgebras}
M = A_0 \subset A_1 \subset \ldots \subset A_k = M(G) \subset A,
\end{equation}
of $A$ which are  $G_i$-crossed product algebras over $Z_i={\rm Fix}(G_i)$ and
where
\begin{equation} \label{eqn:Petit (29) chain of subalgebras 2}
A_{i+1} \cong A_i[t_i;\tau_i]/A_i[t_i;\tau_i](t_i^{q_i} - c_i)
\end{equation}
for all $i \in \{ 0, \ldots, k-1 \}$, such that
\\ (i)  $q_i$ is the prime order of the
factor group $G_{i+1}/G_i$ in the chain of normal subgroups
\eqref{eqn:Subnormal series},
\\ (ii) $\tau_i$ is an $F$-automorphism of $A_i$ of inner order $q_i$ which restricts to the
automorphism $\sigma_{i+1} \in G_{i+1}$ that generates $G_{i+1}/G_i$, and
\\ (iii) $c_i \in {\rm Fix}(\tau_i)$ is
invertible.
\end{theorem}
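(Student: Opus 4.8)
The plan is to realize the chain concretely by setting $A_i = M(G_i)$ for $0 \le i \le k$, where $M(G_i)$ is the subalgebra of $A$ furnished by Theorem \ref{lem:Petit (27)}. Since $\{1\} = G_0 \le G_1 \le \dots \le G_k = G$ with each inclusion proper (as $G_{i+1}/G_i$ has order $q_i>1$), this immediately gives the strict chain \eqref{eqn:Petit (29) chain of subalgebras} with $A_0 = M(\{1\}) = M$ and $A_k = M(G)$; moreover each $A_i = M(G_i)$ is a $G_i$-crossed product algebra over $Z_i = {\rm Fix}(G_i)$ by Theorem \ref{lem:Petit (27)}(iii), and $M/Z_i$ is Galois with ${\rm Gal}(M/Z_i) = G_i$.

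Next I would fix, for each $i$, the basis element $x_{\sigma_{i+1}} \in M(G_{i+1})$ attached to the generator $\sigma_{i+1}$ of $G_{i+1}/G_i$ and define $\tau_i$ as the restriction of the inner automorphism $I_{x_{\sigma_{i+1}}}$ to $A_i$. The first thing to verify is that $\tau_i$ maps $A_i$ into $A_i$: for $\rho \in G_i$ the element $x_{\sigma_{i+1}} x_\rho x_{\sigma_{i+1}}^{-1}$ induces $\sigma_{i+1}\rho\sigma_{i+1}^{-1}$ on $M$, which lies in $G_i$ by normality of $G_i$ in $G_{i+1}$, so Lemma \ref{lem:Petit (26)}(ii) places it in $M^\times x_{\sigma_{i+1}\rho\sigma_{i+1}^{-1}} \subset M(G_i)$; combined with $I_{x_{\sigma_{i+1}}}(M) = \sigma_{i+1}(M) = M$ this shows $\tau_i \in {\rm Aut}(A_i)$, and $\tau_i$ fixes $F$ because $\sigma_{i+1}$ does. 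By construction $\tau_i|_M = \sigma_{i+1}$, which is the restriction claim in (ii); and since ${\rm Gal}(Z_i/Z_{i+1}) \cong G_{i+1}/G_i$ is cyclic of order $q_i$ generated by $\sigma_{i+1}|_{Z_i}$, the automorphism $\tau_i|_{Z_i}$ has order $q_i$.

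To pin down the inner order I would set $c_i := x_{\sigma_{i+1}}^{q_i}$. Because $\sigma_{i+1}^{q_i} \in G_i$, Lemma \ref{lem:Petit (26)}(ii) gives $c_i \in M^\times x_{\sigma_{i+1}^{q_i}} \subset A_i^\times$, and $c_i$ commutes with $x_{\sigma_{i+1}}$, so $\tau_i(c_i) = c_i$; this yields (iii). Then $\tau_i^{q_i} = I_{c_i}|_{A_i}$ is inner in $A_i$, while for $0 < j < q_i$ an equality $\tau_i^j = I_u$ with $u \in A_i^\times$ would restrict on $M$ (which $I_u$ preserves, as $\tau_i^j$ does) to an element of ${\rm Aut}_{Z_i}(M) = G_i$ equal to $\sigma_{i+1}^j$, contradicting $\sigma_{i+1}^j \notin G_i$. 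Hence $\tau_i$ has inner order exactly $q_i$, finishing (i) and (ii).

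Finally, for the isomorphism \eqref{eqn:Petit (29) chain of subalgebras 2} I would use the relation $x_{\sigma_{i+1}} a = \tau_i(a) x_{\sigma_{i+1}}$ for $a \in A_i$ to define a ring homomorphism $\phi : A_i[t_i;\tau_i] \to A_{i+1}$ by $t_i \mapsto x_{\sigma_{i+1}}$, which kills $t_i^{q_i} - c_i$. The key point, and the step I expect to be the main obstacle, is to show $\phi$ induces an isomorphism on the quotient $A_i[t_i;\tau_i]/A_i[t_i;\tau_i](t_i^{q_i}-c_i)$. Right division by the monic polynomial $t_i^{q_i}-c_i$ shows this quotient is a free left $A_i$-module of rank $q_i$ with basis $1, t_i, \dots, t_i^{q_i-1}$; on the other side $A_{i+1}$ decomposes as $\bigoplus_{j=0}^{q_i-1} A_i x_{\sigma_{i+1}}^{\,j}$, where the summands sit in the distinct coset-blocks $\bigoplus_{\sigma \in \sigma_{i+1}^{\,j} G_i} M x_\sigma$, so the images $1, x_{\sigma_{i+1}}, \dots, x_{\sigma_{i+1}}^{q_i-1}$ are left $A_i$-independent and $A_{i+1}$ is free of rank $q_i$ over $A_i$. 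A dimension count over $F$ (both sides having $F$-dimension $q_i\,\dim_F A_i = n|G_{i+1}|$) then forces the induced surjection to be bijective; in particular the left ideal $A_i[t_i;\tau_i](t_i^{q_i}-c_i)$ is the kernel of a ring map, hence two-sided, so $t_i^{q_i}-c_i$ is invariant and $A_{i+1}$ is exhibited as the generalized cyclic algebra $(A_i,\tau_i,c_i)$ claimed in \eqref{eqn:Petit (29) chain of subalgebras 2}.
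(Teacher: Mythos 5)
Your proposal is correct and takes essentially the same route as the paper's own proof: the paper likewise sets $A_i = M(G_i)$, defines $\tau_i$ as the restriction of the inner automorphism $I_{x_{\sigma_{i+1}}}$ to $A_i$ (using normality of $G_i$ in $G_{i+1}$ to see that $A_i$ is preserved), takes $c_i = x_{\sigma_{i+1}}^{q_i}$, and identifies $A_{i+1}$ with $A_i[t_i;\tau_i]/A_i[t_i;\tau_i](t_i^{q_i}-c_i)$ via the decomposition $A_{i+1}=A_i \oplus A_i x_{\sigma_{i+1}} \oplus \dots \oplus A_i x_{\sigma_{i+1}}^{q_i-1}$. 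The only differences are presentational: you build the homomorphism from the skew polynomial ring onto $A_{i+1}$ and conclude by a dimension count (the paper instead writes down the map $y\,x_{\sigma_{i+1}}^{j} \mapsto y\,t_i^{j}$ in the other direction and checks it is multiplicative), and you spell out why no power $\tau_i^{j}$ with $0<j<q_i$ can be inner, a point the paper leaves implicit when asserting that the inner order is exactly $q_i$.
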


Note that the inclusion $M(G) \subset A$ in \eqref{eqn:Petit (29)
chain of subalgebras}
 is an equality if and only if $M/F$ is a Galois extension by Theorem \ref{lem:Petit (27)}, i.e.
  if and only if $A$ is a $G$-crossed product algebra.

\begin{proof}
Define $A_i= M(G_i)$ for all $i \in \{ 1, \ldots, k \}$. $A_i$ is a $G_i$-crossed product algebra over ${\rm Fix}(G_i)$
by Theorem \ref{lem:Petit (27)}.

 $G_1/G_0 \cong
G_1$ is a cyclic subgroup of $G$ of order $q_0$ generated by some
$\sigma_1 \in G$. Let $\tau_0 = \sigma_1$, then there exists $c_0 \in {\rm Fix}(\tau_0)$ such
that $A_1 = M(G_1)$ is $F$-isomorphic to
$$M[t_0;\tau_0]/M[t_0;\tau_0](t_0^{q_0} - c_0),$$
by Corollary \ref{prop:Petit (28)}, which is a cyclic algebra of prime degree $q_0$ over $\mathrm{Fix}(\tau_0)$.

Now $G_1 \triangleleft G_2$ and $G_2/G_1$ is cyclic of prime order
$q_1$ with
\begin{equation}
G_2/G_1 = \{ G_1, G_1 \sigma_2, \ldots, G_1 \sigma_2^{q_1-1} \}
\end{equation}
for some $\sigma_2 \in G_2$. Hence we can write
$G_2 = \{ h
\sigma_2^i \ \vert \ h \in G_1, 0 \leq i \leq q_1-1 \}$ and thus $A_2
= M(G_2)$ has a basis $$\{ x_{h \sigma_2^j} \ \vert \ h \in G_1, \ 0\leq j \leq q_1-1 \},$$
as an $M$-vector space. Recall $$M^{\times} x_{h
\sigma_2^j} = M^{\times} x_{h} x_{\sigma_2^j} = M^{\times} x_{h}
x_{\sigma_2}^j$$
 for all $h \in G_1$ by Lemma \ref{lem:Petit (26)}, and $\{ 1, x_{\sigma_2}, \ldots, x_{\sigma_2}^{q_1-1} \}$
 is a basis for $A_2$ as a left $A_1$-module, i.e.
\begin{equation} \label{eqn:proof petit (29) 2}
A_2 = A_1 + A_1 x_{\sigma_2} + \ldots + A_1 x_{\sigma_2}^{q_1-1}.
\end{equation}
 We have $G_2 G_1 = G_1 G_2$ as
$G_1$ is normal in $G_2$ and so
for every $h \in G_1$, we get
$\sigma_2 h = h^\prime\sigma_2$ for some $h^\prime \in G_1$. Choose the basis
$\{ x_{h}\,|\, h\in G_1\}$
of
$A_1$ as a vector space over $M$.
 By
\eqref{eqn:M(G) rules (1)} we obtain
\begin{equation} \label{eqn:proof petit (29) 3}
x_{\sigma_2} x_{h} = a_{\sigma_2,h} x_{\sigma_2 h} =
a_{\sigma_2,h} x_{h^\prime\sigma_2} = a_{\sigma_2,h}
(a_{h^\prime,\sigma_2})^{-1} x_{h^\prime} x_{\sigma_2}.
\end{equation}

Recall $x_{\sigma_2}\in A^\times$  by Lemma \ref{lem:Petit (26)}. The inner automorphism
 $$\tau_1: A \rightarrow A,\ z \mapsto x_{\sigma_2} z x_{\sigma_2}^{-1}$$
restricts to $\sigma_2$ on $M$. Moreover,
\begin{equation} \label{eqn:proof petit (29) 4}
\begin{split}
\tau_1(x_{h_j}) &= x_{\sigma_2} x_{h_j} x_{\sigma_2}^{-1} =
 a_{\sigma_2,h_j} (a_{h_j^\prime,\sigma_2})^{-1} x_{h_j^\prime} x_{\sigma_2} x_{\sigma_2}^{-1} \\
 &= a_{\sigma_2,h_j} (a_{h_j^\prime,\sigma_2})^{-1} x_{h_j^\prime} \in A_1,
\end{split}
\end{equation}
for all $h_j \in G_1$, i.e. $\tau_1 \vert_{A_1}(y) \in A_1$ for all $y \in A_1$ and so $\tau_1 \vert_{A_1}$ is an
$F$-automorphism of $A_1$. Moreover,
 $$x_{\sigma_2} x_{h} = \tau_1 \vert_{A_1}(x_{h})
x_{\sigma_2},$$ for all $h \in G_1$ by \eqref{eqn:proof petit (29)
3}, \eqref{eqn:proof petit (29) 4}, and $$x_{\sigma_2} m =
\sigma_2(m) x_{\sigma_2} = \tau_1 \vert_{A_1}(m) x_{\sigma_2},$$ for
all $m \in M$.
We conclude that
\begin{equation} \label{eqn:proof petit (29) 6}
x_{\sigma_2} y = \tau_1 \vert_{A_1}(y) x_{\sigma_2}
\end{equation}
for all $y \in A_1$. Define $c_1 = x_{\sigma_2}^{q_1}$, then
$\sigma_2^{q_1} \in G_1$ by \eqref{eqn:proof petit (29) 1} which
implies $c_1 \in A_1$.  Furthermore $c_1$ is invertible since
$x_{\sigma_2}$ is invertible. Also,
$$\tau_1 \vert_{A_1}(c_1) = x_{\sigma_2}
x_{\sigma_2}^{q_1} x_{\sigma_2}^{-1} = c_1$$ which means $c_1 \in {\rm
Fix}(\tau_1 \vert_{A_1})^\times$. Notice $$x_{\sigma_2^{-q_1}}
x_{\sigma_2^{q_1}} = a_{\sigma_2^{-q_1},\sigma_2^{q_1}} x_{{\rm
id}}\in M^{\times},$$ therefore $c_1^{-1} = x_{\sigma_2^{q_1}}^{-1}
\in M^{\times}x_{\sigma_2^{-q_1}} \in A_1$ as $\sigma_2^{-q_1} \in
G_1$. Hence $\tau_1 \vert_{A_1}$ has inner order $q_1$, since indeed
$$(\tau_1 \vert_{A_1})^{q_1}:A_1 \rightarrow A_1, z \mapsto c_1 z
c_1^{-1},$$
 is an inner automorphism.

Consider the  algebra $$B_2 = A_1[t_1;\tau_1 \vert_{A_1}]
/A_1[t_1;\tau_1 \vert_{A_1}](t_1^{q_1} - c_1)$$
with center
\begin{align*}
C(B_2) & \supset \{ b \in A_1 \ \vert \ bh = hb \text{ for
all } h \in B_2 \} = C(A_1) \cap {\rm Fix}(\tau_1) \supset
F.
\end{align*}
By \eqref{eqn:proof petit (29) 2} and \eqref{eqn:proof petit
(29) 6}, the $F$-linear map $$\phi: A_2 \rightarrow B_2, \ y x_{\sigma_2}^i
\mapsto y t_1^i \qquad \qquad (y \in A_1),$$ is
 an isomorphism.  In addition, by a straightforward calculation we have
$$\phi \big( (y x_{\sigma_2}^i)(z x_{\sigma_2}^j) \big)=\phi(y
x_{\sigma_2}^i)  \phi(z x_{\sigma_2}^j)$$
for all $y, z \in A_1$, $i, j \in \{ 0, \ldots, q_1-1 \}$, so
$\phi$ is also multiplicative, thus an $F$-algebra isomorphism. Continuing in this manner for $G_2
\triangleleft G_3$ etc. yields the assertion.
\end{proof}

For a subset $B$ in $ A$, let  ${\rm Cent}_{A}(B)$ denote the centralizer of $B$ in $A$.
Then  the algebras $A_i$ are the centralizers of $\mathrm{Fix}(G_{i})$ in $A_{i+1}$:

\begin{corollary} \label{cor:Centralizer of Z_i in A_i+1}
Let $M/F$ be a field extension of degree $n$ with non-trivial solvable $G = {\rm Aut}_F(M)$
with normal series \eqref{eqn:Subnormal series}, and $A$  a central simple algebra of degree
$n$ over $F$ with maximal subfield $M$. Then
 $$A_i = {\rm Cent}_{A_{i+1}}(\mathrm{Fix}(G_i))$$
  for all $i \in \{ 0, \ldots, k-1 \}$ where
 $A_i = M(G_i)$ are as in Theorem \ref{thm:Petit (29)}.
\end{corollary}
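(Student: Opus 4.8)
Throughout set $Z_i = {\rm Fix}(G_i)$, so that the statement amounts to the two inclusions $A_i \subseteq {\rm Cent}_{A_{i+1}}(Z_i)$ and ${\rm Cent}_{A_{i+1}}(Z_i) \subseteq A_i$. Everything rests on the explicit description of $A_{i+1} = M(G_{i+1})$ from Theorem \ref{lem:Petit (27)}: it carries $\{ x_\sigma \mid \sigma \in G_{i+1} \}$ as an $M$-basis, the subalgebra $A_i = M(G_i)$ is the $M$-span of those $x_\sigma$ with $\sigma \in G_i$, and the twisting rule is $x_\sigma m = \sigma(m) x_\sigma$ as in \eqref{eqn:M(G) rules (2)}. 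The first inclusion is immediate on the spanning set: for $z \in Z_i$ and $\sigma \in G_i$ we have $\sigma(z) = z$, whence $x_\sigma z = \sigma(z) x_\sigma = z x_\sigma$, and since $Z_i \subseteq M$ with $M$ commutative every $m \in M$ commutes with $z$ as well; thus each term $m_\sigma x_\sigma$ with $\sigma \in G_i$, and hence all of $A_i$, centralizes $Z_i$.

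For the reverse inclusion I would take $w \in {\rm Cent}_{A_{i+1}}(Z_i)$ and expand $w = \sum_{\sigma \in G_{i+1}} m_\sigma x_\sigma$ with $m_\sigma \in M$. Imposing $w z = z w$ for all $z \in Z_i$ and using $x_\sigma z = \sigma(z) x_\sigma$ together with the commutativity of $M$ turns the condition into $\sum_{\sigma} (\sigma(z) - z) m_\sigma x_\sigma = 0$. By the $M$-linear independence of the $x_\sigma$ (established inside Theorem \ref{lem:Petit (27)}) this forces $(\sigma(z) - z) m_\sigma = 0$ for every $\sigma \in G_{i+1}$ and every $z \in Z_i$, so that $m_\sigma \neq 0$ can occur only when $\sigma$ fixes $Z_i$ pointwise.

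The main obstacle is the closing Galois-theoretic identification: I must argue that an $F$-automorphism $\sigma$ of $M$ fixing $Z_i = {\rm Fix}(G_i)$ pointwise already belongs to $G_i$. This is Artin's theorem applied to the finite automorphism group $G_i$, which guarantees that $M/Z_i$ is Galois with Galois group exactly $G_i$; consequently every $Z_i$-automorphism of $M$, our $\sigma$ in particular, lies in $G_i$. Hence $m_\sigma = 0$ for all $\sigma \notin G_i$, so $w \in A_i$, which completes the reverse inclusion and the corollary.

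As an independent check on the bookkeeping one can instead match dimensions over $Z_{i+1}$. Since $A_{i+1}$ is central simple over $Z_{i+1}$ and $Z_i$ is a subfield with $[Z_i : Z_{i+1}] = |G_{i+1}| / |G_i| = q_i$, the double centralizer theorem gives $\dim_{Z_{i+1}} {\rm Cent}_{A_{i+1}}(Z_i) = |G_{i+1}|^2 / q_i = |G_i| \, |G_{i+1}|$, while $\dim_{Z_{i+1}} A_i = |G_i|^2 \cdot q_i = |G_i| \, |G_{i+1}|$; these agree, so together with the first inclusion this already forces equality.
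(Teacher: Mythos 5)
Your proof is correct and its core is essentially the paper's own argument: prove the easy inclusion, expand a centralizing element in the $M$-basis $\{x_\sigma \mid \sigma \in G_{i+1}\}$, use the twisting relation $x_\sigma z = \sigma(z) x_\sigma$ and $M$-linear independence to force $m_\sigma(\sigma(z)-z)=0$, and conclude that any $\sigma$ with $m_\sigma \neq 0$ must lie in $G_i$. Two differences are worth noting. First, your closing Galois-theoretic step is tighter than the paper's: the paper argues that since $\mathrm{Fix}(G_{i+1})$ is properly contained in $\mathrm{Fix}(G_i)$, one has $\tau(z)\neq z$ for \emph{all} $z\in \mathrm{Fix}(G_i)\setminus\mathrm{Fix}(G_{i+1})$, a claim that neither follows as stated nor is needed; what the argument really requires is exactly what you invoke, namely Artin's theorem that $M/\mathrm{Fix}(G_i)$ is Galois with group precisely $G_i$, so no $\tau\in G_{i+1}\setminus G_i$ can fix $\mathrm{Fix}(G_i)$ pointwise. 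Second, your concluding dimension count is a genuinely different argument absent from the paper: since $A_{i+1}$ is central simple over $Z_{i+1}$ (Theorem \ref{lem:Petit (27)}) and $[Z_i:Z_{i+1}]=q_i$ by the Galois correspondence for $M/Z_{i+1}$, the double centralizer theorem gives $\dim_{Z_{i+1}}\mathrm{Cent}_{A_{i+1}}(Z_i)=|G_{i+1}|^2/q_i=|G_i|\,|G_{i+1}|=\dim_{Z_{i+1}} A_i$, so the easy inclusion alone already forces equality. That route buys brevity and robustness (no basis manipulation at all), at the cost of invoking the double centralizer theorem rather than only the elementary structure of $M(G_{i+1})$.
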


\begin{proof}
Clearly $A_i \subset {\rm Cent}_{A_{i+1}}(\mathrm{Fix}(G_i))$ for all
$i \in \{ 0, \ldots, k-1 \}$ because $A_i \subset A_{i+1}$ and $C(A_i) = \mathrm{Fix}(G_i)$.
To prove ${\rm Cent}_{A_{i+1}}(\mathrm{Fix}(G_i)) \subset A_i$,
let $x = \sum_{\sigma \in G_{i+1}} m_{\sigma} x_{\sigma} \in {\rm Cent}_{A_{i+1}}(\mathrm{Fix}(G_i))$ for some
$m_{\sigma} \in M$. If $\tau \in G_{i+1} \setminus G_i$ is such that $m_{\tau} \neq 0$, then $m_{\tau} x_{\tau} z = z m_{\tau} x_{\tau}$ for all $z \in \mathrm{Fix}(G_i)$, that is, $\tau(z) = z$ for all $z \in \mathrm{Fix}(G_i)$.

Now $\mathrm{Fix}(G_{i+1})$ is properly contained in $\mathrm{Fix}(G_i)$, therefore
$\tau(z) \neq z$ for all $z \in \mathrm{Fix}(G_i) \setminus \mathrm{Fix}(G_{i+1})$, a contradiction.
This implies $x = \sum_{\sigma \in G_i} m_{\sigma} x_{\sigma}$ as required.
\end{proof}

\begin{corollary} \label{cor:importantIII}
Let $A$ be a central division algebra over $F$ containing a
maximal subfield $M$ with non-trivial solvable $G = {\rm Aut}_F(M)$. Then:
\\ (i) $A$
 contains the cyclic division algebra $(M/{\rm Fix} (\sigma_1), \sigma_1, c_0)$ of prime degree $q_0$ over
  ${\rm Fix} (\sigma_1)$ as a subalgebra.
  \\ (ii) There is a non-central element $t_0 \in A$ such that $t_0^{q_0}\in {\rm Fix} (\sigma_1)^\times$
  and $t_0^{m}\not\in {\rm Fix} (\sigma_1)$ for all $1\leq m<q_0$.
\\
 Here, $q_0$ is the order of the cyclic subgroup  $G_1$ of the normal subseries (\ref{eqn:Subnormal series}) of $G$.
\end{corollary}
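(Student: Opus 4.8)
The plan is to read both parts straight off the bottom layer ($i=0$) of the chain produced by Theorem \ref{thm:Petit (29)}, exploiting that $A$ is a division algebra so that all of its subalgebras are division algebras as well.

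\emph{Part (i).} Since $G$ is non-trivial and solvable, fix a normal subseries \eqref{eqn:Subnormal series}. Its first factor $G_1 = G_1/G_0$ is cyclic of prime order $q_0$, generated by some $\sigma_1 \in G$. Theorem \ref{thm:Petit (29)} (with $i = 0$, $\tau_0 = \sigma_1$, $A_0 = M$) gives a subalgebra $A_1 = M(G_1)$ of $A$ with $A_1 \cong M[t_0;\sigma_1]/M[t_0;\sigma_1](t_0^{q_0}-c_0)$ and $c_0 \in {\rm Fix}(\sigma_1)^\times$. By definition this is precisely the cyclic algebra $(M/{\rm Fix}(\sigma_1),\sigma_1,c_0)$ of degree $q_0$ over ${\rm Fix}(\sigma_1)$. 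As a subalgebra of the division algebra $A$ it is itself a division algebra, which proves (i) with no further work; the degree is prime because $q_0 = |G_1|$ is prime.

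\emph{Part (ii).} I would take $t_0 = x_{\sigma_1}$, the invertible element of $A$ from Lemma \ref{lem:Petit (26)} whose inner automorphism restricts to $\sigma_1$ on $M$; under the isomorphism above it is the generator of the cyclic algebra and $c_0 = t_0^{q_0}$. Three properties then need checking. First, $t_0$ is non-central: were it central it would commute with $M$, forcing $I_{t_0}\vert_M = \sigma_1 = \mathrm{id}$, which contradicts $\sigma_1$ having prime order $q_0 > 1$. Second, $t_0^{q_0} \in {\rm Fix}(\sigma_1)^\times$: since $\sigma_1^{q_0} = \mathrm{id}$, the coset rule \eqref{eqn:M^X x_sigma M^times x_tau = M^times x_sigma tau} gives $x_{\sigma_1}^{q_0} \in M^\times x_{\mathrm{id}} = M^\times$, so $c_0 \in M^\times$, and conjugating $c_0$ by $x_{\sigma_1}$ fixes it, whence $\sigma_1(c_0) = c_0$. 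Third, $t_0^m \notin {\rm Fix}(\sigma_1)$ for $1 \le m < q_0$: the same coset rule gives $t_0^m = x_{\sigma_1}^m \in M^\times x_{\sigma_1^m}$, so if $t_0^m$ lay in ${\rm Fix}(\sigma_1) \subseteq M = M^\times x_{\mathrm{id}}$ the disjointness of distinct cosets (Lemma \ref{lem:Petit (26)}(iii)) would force $\sigma_1^m = \mathrm{id}$, i.e. $q_0 \mid m$, which is impossible.

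The only point requiring care is the identification $t_0 \leftrightarrow x_{\sigma_1}$ together with $c_0 = x_{\sigma_1}^{q_0}$, so that all three assertions of (ii) reduce to the coset group structure of Lemma \ref{lem:Petit (26)}(iii); once this identification is made explicit, each verification is a one-line computation, and I do not anticipate any substantive obstacle beyond bookkeeping.
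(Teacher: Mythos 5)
Your proposal is correct and takes essentially the same route as the paper: the paper states this corollary without a separate proof, as an immediate consequence of Theorem \ref{thm:Petit (29)} — part (i) being the $i=0$ layer $A_1 = M(G_1)$ of the chain (a division algebra because it is a finite-dimensional subalgebra of the division algebra $A$), and part (ii) being the observation that $t_0 = x_{\sigma_1}$ is a $q_0$-central element of $A_1$ over ${\rm Fix}(\sigma_1)$. Your explicit verifications for (ii) via the coset structure of Lemma \ref{lem:Petit (26)} simply fill in the details the paper leaves implicit, and they are sound.
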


Additionally, we obtain the following straightforward observations:

\begin{corollary} \label{cor:dimension and center of M_i}
Let $A_i$ be as in
 (\ref{eqn:Petit (29) chain of subalgebras}) of Theorem \ref{thm:Petit (29)}, $i \in \{ 0, \ldots, k \}$.
\\ (i)  $A_i$ is a generalized cyclic algebra over  $Z_i={\rm Fix}(G_i) $ of degree
 $${\rm deg}(A_{i-1})q_{i-1}=\prod_{l=0}^{i-1} q_l$$
and
\begin{equation} \label{eqn:dimension and center of M_i 1}
M = Z_0 \supset \ldots \supset Z_{k-1} \supset Z_k \supset F.
\end{equation}
 (ii) $M/Z_i$ is  a Galois extension and $M$ is a maximal subfield of $A_i$.
\end{corollary}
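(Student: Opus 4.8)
The plan is to read off both assertions from the structural description already obtained in Theorem \ref{thm:Petit (29)}, combined with the Galois correspondence for the extension $M/{\rm Fix}(G)$; beyond bookkeeping of fixed fields and dimensions, no genuinely new computation is required. I would dispatch (ii) first, since it is immediate: by Theorem \ref{lem:Petit (27)}(iii) applied to the subgroup $H=G_i$ of $G$, the subalgebra $A_i=M(G_i)$ is a $G_i$-crossed product algebra over ${\rm Fix}(G_i)=Z_i$ with maximal subfield $M$, and the proof of that theorem shows in particular that $M/{\rm Fix}(G_i)$ is Galois of degree $|G_i|$ with group $G_i$. That is exactly (ii), and it also records that $A_i$ is central simple with $C(A_i)=Z_i$.

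For (i) the first step is to recognise the skew-polynomial quotient of Theorem \ref{thm:Petit (29)} as a generalized cyclic algebra. By \eqref{eqn:Petit (29) chain of subalgebras 2} we have $A_i\cong A_{i-1}[t_{i-1};\tau_{i-1}]/A_{i-1}[t_{i-1};\tau_{i-1}](t_{i-1}^{q_{i-1}}-c_{i-1})$ with $\tau_{i-1}$ an $F$-automorphism of $A_{i-1}$ of inner order $q_{i-1}$ and $c_{i-1}\in{\rm Fix}(\tau_{i-1})^\times$. Since $A_{i-1}$ is simple with center $Z_{i-1}$ by (ii), this is precisely the algebra $(A_{i-1},\tau_{i-1},c_{i-1})$ of Section \ref{sec:gca}, so it remains only to identify its center. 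The key step is to compute the action of $\tau_{i-1}$ on $Z_{i-1}$: as $\tau_{i-1}$ restricts to $\sigma_i$ on $M$ and $Z_{i-1}\subset M$, we have $\tau_{i-1}|_{Z_{i-1}}=\sigma_i|_{Z_{i-1}}$. Applying the Galois correspondence to $M/Z_i$ (Galois with group $G_i$), the subgroup $G_{i-1}$ corresponds to $Z_{i-1}$, and since $G_{i-1}$ is normal in $G_i$ with $G_i/G_{i-1}$ cyclic of prime order $q_{i-1}$ generated by $\sigma_iG_{i-1}$, the subextension $Z_{i-1}/Z_i$ is itself Galois with group $G_i/G_{i-1}$. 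Hence $\sigma_i|_{Z_{i-1}}$ has order exactly $q_{i-1}$ and fixed field exactly $Z_i$, so $A_i=(A_{i-1},\tau_{i-1},c_{i-1})$ is a generalized cyclic algebra over ${\rm Fix}(\tau_{i-1}|_{Z_{i-1}})=Z_i$.

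The degree then follows from the formula ${\rm deg}(S,\sigma,d)=m\,{\rm deg}(S)$ of Section \ref{sec:gca}, which here reads ${\rm deg}(A_i)=q_{i-1}{\rm deg}(A_{i-1})$; an induction starting from ${\rm deg}(A_0)={\rm deg}(M)=1$ gives ${\rm deg}(A_i)=\prod_{l=0}^{i-1}q_l$. Finally, the chain of centers \eqref{eqn:dimension and center of M_i 1} is a last application of the Galois correspondence: the inclusions in \eqref{eqn:Subnormal series} are strict because each factor $G_{l+1}/G_l$ has prime order $>1$, so the fixed fields strictly decrease, $M={\rm Fix}(G_0)\supsetneq\cdots\supsetneq{\rm Fix}(G_k)={\rm Fix}(G)\supseteq F$, i.e. $M=Z_0\supset\cdots\supset Z_k\supset F$. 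I expect the only delicate point to be the identification of the center of the generalized cyclic algebra — that is, checking that $\tau_{i-1}$ restricts to an automorphism of $Z_{i-1}$ of order precisely the prime $q_{i-1}$ with fixed field precisely $Z_i$ — which is exactly where the normality $G_{i-1}\triangleleft G_i$ and the Galois correspondence for the tower $Z_i\subset Z_{i-1}\subset M$ are used.
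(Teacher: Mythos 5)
Your proof is correct and takes essentially the same route as the paper's: both read (i) and (ii) off from Theorem \ref{thm:Petit (29)} together with the definition and degree formula for generalized cyclic algebras in Section \ref{sec:gca}, and get the chain $Z_0 \supset \cdots \supset Z_k \supset F$ from the chain of subgroups via the Galois correspondence. The only difference is one of detail: you explicitly verify ${\rm Fix}(\tau_{i-1}\vert_{Z_{i-1}}) = Z_i$ using normality of $G_{i-1}$ in $G_i$, a point the paper's terser proof (which just invokes the definition, the degree relation ${\rm deg}(A_i)={\rm deg}(A_{i-1})q_{i-1}$, and induction) leaves implicit.
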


\begin{proof}
(i) and (ii):
  $A_i$ is a generalized cyclic
algebra as defined in \ref{sec:gca}. Since $G_{i-1} \leq G_i$ we have
$$Z_i = {\rm Fix}(G_i) \subset {\rm Fix}(G_{i-1}) = Z_{i-1},$$
 for all $i \in \{1, \ldots, k \}$.
 We know that  $A_i$ has ${\rm deg}(A_{i})={\rm deg}(A_{i-1})q_{i-1}$ over its center. By induction we obtain the assertion.
\\ (ii) is trivial by Theorem \ref{thm:Petit (29)}.
\end{proof}

Hence even if a central division algebra $A$ over $F$ is a noncrossed product,
if $A$ contains a maximal field extension $M$  with non-trivial solvable $G = {\rm Aut}_F(M)$
then it contains a chain of generalized cyclic division algebras:

\begin{corollary} \label{cor:important}
Let $M/F$ be a field extension of degree $n$ with non-trivial solvable $G = {\rm Aut}_F(M)$, and $A$
a central division algebra over $F$ with maximal subfield $M$. Then  $A$
 contains a chain of generalized cyclic division algebras $A_i$  over intermediate fields $Z_i={\rm Fix}(G_i)$
  of $M/F$ as in (\ref{eqn:Petit (29) chain of subalgebras}).
 Here, $q_i$ is the order of the cyclic factor group  $G_{i+1}/G_i$ of the normal subseries (\ref{eqn:Subnormal series}) of $G$.
\end{corollary}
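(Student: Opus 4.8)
The plan is to assemble this directly from the two preceding results, so that essentially nothing new has to be computed. Theorem \ref{thm:Petit (29)} already produces, for the solvable group $G$ with its normal subseries \eqref{eqn:Subnormal series}, the chain of subalgebras
\[
M = A_0 \subset A_1 \subset \ldots \subset A_k = M(G) \subset A
\]
inside $A$, with $A_{i+1} \cong A_i[t_i;\tau_i]/A_i[t_i;\tau_i](t_i^{q_i}-c_i)$ and $q_i$ the prime order of $G_{i+1}/G_i$. Corollary \ref{cor:dimension and center of M_i} then identifies each $A_i$ as a generalized cyclic algebra over its center $Z_i = \mathrm{Fix}(G_i)$, where the $Z_i$ form a descending chain of intermediate fields of $M/F$. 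Thus the only point left to verify, and the one that upgrades Corollary \ref{cor:dimension and center of M_i} to the present statement under the extra hypothesis that $A$ is a division algebra, is that each $A_i$ is itself a \emph{division} algebra.

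First I would observe that every $A_i$ is a finite-dimensional unital $F$-subalgebra of $A$, since it sits in the chain \eqref{eqn:Petit (29) chain of subalgebras}. I would then invoke the elementary fact that a finite-dimensional unital subalgebra of a division algebra is again a division algebra. Concretely, for nonzero $b \in A_i$ the left multiplication map $L_b\colon A_i \to A_i$, $x \mapsto bx$, is $F$-linear and injective, because $A$ has no zero divisors; by finite-dimensionality it is therefore surjective, so there is $x \in A_i$ with $bx = 1$, and the symmetric argument on the right yields a two-sided inverse of $b$ lying in $A_i$. Hence each $A_i$ is a finite-dimensional division algebra over $F$.

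Combining these two observations, each $A_i$ is simultaneously a generalized cyclic algebra over $Z_i$ and a division algebra, i.e. a generalized cyclic division algebra, and the chain \eqref{eqn:Petit (29) chain of subalgebras} exhibits $A$ as containing the asserted chain, with $q_i = |G_{i+1}/G_i|$ the prime orders arising from the subseries \eqref{eqn:Subnormal series}.

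I do not expect any genuine obstacle here: the substance lies entirely in Theorem \ref{thm:Petit (29)} and Corollary \ref{cor:dimension and center of M_i}, while the present corollary is the observation that passing to subalgebras of a division algebra preserves the division property. The only point requiring a moment's care is to apply the inverse argument to \emph{finite-dimensional} subalgebras, so that injective $F$-linear endomorphisms are automatically surjective; this is immediate since $A$ is finite-dimensional over $F$.
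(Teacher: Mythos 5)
Your proof is correct and matches the paper's (implicit) argument: the paper states this corollary as an immediate consequence of Theorem \ref{thm:Petit (29)} and Corollary \ref{cor:dimension and center of M_i}, with exactly the observation you supply, namely that a finite-dimensional unital subalgebra of a division algebra is itself a division algebra. Your spelled-out injectivity-implies-surjectivity argument for $L_b$ is the standard justification of that fact and raises no issues.
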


If $A$ is a division algebra in the above setup then $G$ solvable implies that $A^\times$ contains an solvable subgroup:

\begin{lemma}
Suppose $A$ is a  central division algebra over $F$. If $A$ contains a maximal subfield $M/F$ with solvable
 $G={\rm Aut}_F(M)$ then
$A^\times$ contains an solvable subgroup. If $M/F$ is Galois, i.e. $A$ a $G$-crossed product algebra, then
this solvable subgroup is irreducible.
\end{lemma}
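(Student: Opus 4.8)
The plan is to realize the required solvable subgroup as the normalizer of $M^\times$ inside $A^\times$, built directly from the cosets produced in Lemma \ref{lem:Petit (26)}. Set
$$H=\bigcup_{\sigma\in G}M^\times x_\sigma\subseteq A^\times.$$
First I would check that $H$ is a subgroup: by Lemma \ref{lem:Petit (26)}(iii) the cosets $M^\times x_\sigma$ multiply exactly as the elements of $G$, so $H$ is closed under multiplication and under inverses, and it contains $1\in M^\times x_{\rm id}$.

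Next I would exhibit $H$ as a group extension. The assignment $\pi\colon H\to G,\ x\mapsto I_x\vert_M$, is well defined and surjective by Lemma \ref{lem:Petit (26)}(i)--(ii), and it is a group homomorphism because $I_{xy}=I_x\circ I_y$. Its kernel consists of the $x\in A^\times$ that centralize $M$; since $M$ is a maximal subfield of the division algebra $A$, its centralizer in $A$ is $M$ itself (exactly as used in the proof of Lemma \ref{lem:Petit (26)}(ii)), so $\ker\pi=M^\times$. Thus $H$ has an abelian normal subgroup $M^\times$ with quotient $H/M^\times\cong G$. As $G$ is solvable by hypothesis and $M^\times$ is abelian, $H$ is solvable, proving the first assertion.

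For the second assertion, assume $M/F$ is Galois. Then by Theorem \ref{lem:Petit (27)}(ii) we have $A=M(G)=\bigoplus_{\sigma\in G}Mx_\sigma$, so the $F$-linear span of $H$ contains every $Mx_\sigma$ and therefore equals $A$. I would then deduce irreducibility of the left-multiplication action of $H$ on $A$: any nonzero $H$-invariant $F$-subspace $W\subseteq A$ is invariant under the $F$-span of $H$, i.e.\ under left multiplication by all of $A$, so $W$ is a nonzero left ideal of the division algebra $A$ and hence $W=A$. Thus $H$ is an irreducible solvable subgroup of $A^\times$.

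I expect no genuinely hard step, since the statement essentially repackages Lemma \ref{lem:Petit (26)} and Theorem \ref{lem:Petit (27)}. The two points needing care are (a) identifying $\ker\pi=M^\times$, which is precisely where the maximality of $M$ together with $A$ being a division algebra enters, and (b) fixing the meaning of \emph{irreducible}: the cleanest reading is irreducibility of the left-regular action on $A$, and the crossed product decomposition $A=\bigoplus_{\sigma}Mx_\sigma$ is exactly what forces $H$ to span $A$ over $F$, without which irreducibility would fail.
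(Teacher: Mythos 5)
Your proposal is correct and follows essentially the same route as the paper: the paper also takes $N=\bigcup_{\sigma\in G}M^\times x_\sigma$, obtains $N/M^\times\cong G$ via Lemma \ref{lem:Petit (26)}(iii) to conclude solvability, and in the Galois case deduces irreducibility (in the sense that the $F$-algebra $F[N]$ generated by $N$ equals $A$) from the crossed product decomposition, citing \cite[Lemma 1]{KMH} for the details you write out explicitly. Your span argument in fact directly yields the paper's notion of irreducibility, since $\mathrm{span}_F(H)=A$ already forces $F[H]=A$, so the two readings coincide.
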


\begin{proof}
As noted in \cite[Lemma 1]{KMH} (where $M/F$ is Galois, but the argument is the same),
$N=\bigcup_{\sigma\in G} M^\times x_\sigma \subset A^\times$ and it is easy to see that $M^\times$ is a normal
subgroup of $N$, and that $N$ is the normalizer of $M^\times$ in $A^\times.$
Therefore $N/M^\times\cong G$ as in Lemma \ref{lem:Petit (26)} (iii) and if $G$ is solvable as assumed in later sections,
we see that in fact $N$ is a solvable subgroup of $A^\times.$
Since $N=\bigcup M^\times x_\sigma$, if $M/F$ is Galois, i.e. $A$ a $G$-crossed product algebra, then $N$ is irreducible,
i.e. the $F$-algebra generated by elements of $N$, $F[N]$, is $A$ by \cite[Lemma 1]{KMH}.
\end{proof}

Our next result generalizes \cite[(9)]{P68} and characterizes all
the algebras with a maximal subfield $M/F$ that have a solvable automorphism group
 $G = \mathrm{Aut}_{F}(M)$ via generalized cyclic algebras:

\begin{theorem} \label{thm:Petit (29) Generalised}
Let $M/F$ be a field extension of degree $n$ with non-trivial $G = \mathrm{Aut}_{F}(M)$, and $A$ be a
central simple algebra over $F$ with maximal subfield $M$. Then $G$ is solvable if
 there exists a chain of  subalgebras
\begin{equation} \label{eqn:Petit (29) chain of subalgebras Galois}
M = A_0 \subset A_1 \subset \ldots \subset A_k  \subset A
\end{equation}
of $A$ which all have maximal subfield $M$,  where $A_k$ is a $G$-crossed product algebra over ${\rm Fix}(G)$, and where
\begin{equation} \label{eqn:Petit (29) chain of subalgebras Galois 2}
A_{i+1} \cong A_i[t_i;\tau_i]/A_i[t_i;\tau_i](t_i^{q_i} - c_i),
\end{equation}
for all $i \in \{ 0, \ldots, k-1 \}$,  with
\begin{itemize}
\item[(i)] $q_i$ a prime,
\item[(ii)] $\tau_i$ an $F$-automorphism of $A_i$ of inner order $q_i$ which restricts to an automorphism $\sigma_{i+1} \in G$, and
\item[(iii)] $c_i \in \mathrm{Fix}(\tau_i)^\times$.
\end{itemize}
\end{theorem}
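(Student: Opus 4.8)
The plan is to run the construction of Theorem \ref{thm:Petit (29)} in reverse: from the given chain I would recover a normal series of $G$ with cyclic factors of prime order, which is exactly solvability. The entire argument takes place inside the crossed product algebra. Since $A_k$ is a $G$-crossed product over $\mathrm{Fix}(G)$ containing $M$, Lemma \ref{lem:Petit (26)} and Theorem \ref{lem:Petit (27)} supply elements $x_\sigma\in A^\times$ ($\sigma\in G$) with $I_{x_\sigma}|_M=\sigma$, such that $\{x_\sigma:\sigma\in G\}$ is an $M$-basis of $M(G)=\bigoplus_{\sigma\in G}Mx_\sigma$ and the cosets $M^\times x_\sigma$ form a group isomorphic to $G$. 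To each $A_i$ in the chain I attach the subset $G_i=\{\sigma\in G:x_\sigma\in A_i\}$; since $A_i$ is a subalgebra containing $M$ and $x_\sigma x_\tau\in M^\times x_{\sigma\tau}$, this $G_i$ is closed under multiplication, contains $\mathrm{id}$, and so (being finite) is a subgroup with $\bigoplus_{\sigma\in G_i}Mx_\sigma\subseteq A_i$. The goal is to prove by induction on $i$ that in fact $A_i=M(G_i)=\bigoplus_{\sigma\in G_i}Mx_\sigma$ and that $\{1\}=G_0\trianglelefteq G_1\trianglelefteq\cdots\trianglelefteq G_k=G$ is a normal series with each $G_{i+1}/G_i$ cyclic of prime order $q_i$.

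The base case $A_0=M=Mx_{\mathrm{id}}$ gives $G_0=\{1\}$. For the inductive step I would use the isomorphism \eqref{eqn:Petit (29) chain of subalgebras Galois 2}: it exhibits $A_{i+1}$ as a free left $A_i$-module $\bigoplus_{j=0}^{q_i-1}A_i\,\hat t_i^{\,j}$ for an element $\hat t_i\in A_{i+1}^\times$ (corresponding to $t_i$) with $\hat t_i^{\,q_i}=c_i\in A_i$ and $I_{\hat t_i}|_{A_i}=\tau_i$, so in particular $I_{\hat t_i}|_M=\sigma_{i+1}$. By Lemma \ref{lem:Petit (26)}(ii) this forces $\hat t_i\in M^\times x_{\sigma_{i+1}}$, and likewise $\hat t_i^{\,j}\in M^\times x_{\sigma_{i+1}^{\,j}}$. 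Using $x_\sigma m\in M^\times x_\sigma$ and the rule \eqref{eqn:M^X x_sigma M^times x_tau = M^times x_sigma tau} I then compute
\[
A_i\,\hat t_i^{\,j}=\bigoplus_{\sigma\in G_i}M x_\sigma x_{\sigma_{i+1}^{\,j}}=\bigoplus_{\rho\in G_i\sigma_{i+1}^{\,j}}M x_\rho ,
\]
so $A_{i+1}=\sum_{\rho\in S}Mx_\rho$ with $S=\bigcup_{j=0}^{q_i-1}G_i\sigma_{i+1}^{\,j}$; as the $x_\rho$ are $M$-independent this sum is direct. Comparing $\dim_M A_{i+1}=q_i\dim_M A_i=q_i|G_i|$ with $|S|$ shows the $q_i$ cosets $G_i\sigma_{i+1}^{\,j}$ are pairwise distinct, whence $S$ is a subgroup, $A_{i+1}=M(S)$, $G_{i+1}=S$, and $[G_{i+1}:G_i]=q_i$.

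It remains to establish normality, which I expect to be the main point. Here I would exploit that $\tau_i$ is an automorphism of $A_i$, i.e. $\hat t_i A_i \hat t_i^{-1}=A_i$: for $\sigma\in G_i$ the element $\tau_i(x_\sigma)=\hat t_i x_\sigma \hat t_i^{-1}$ lies in $A_i$, while $I_{\tau_i(x_\sigma)}|_M=\sigma_{i+1}\sigma\sigma_{i+1}^{-1}$ places it in $M^\times x_{\sigma_{i+1}\sigma\sigma_{i+1}^{-1}}$ by Lemma \ref{lem:Petit (26)}(ii). Comparing with $A_i=\bigoplus_{\rho\in G_i}Mx_\rho$ and using $M$-independence forces $\sigma_{i+1}\sigma\sigma_{i+1}^{-1}\in G_i$; thus $\sigma_{i+1}G_i\sigma_{i+1}^{-1}\subseteq G_i$, and by finiteness $\sigma_{i+1}$ normalizes $G_i$. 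Since $G_{i+1}=\langle G_i,\sigma_{i+1}\rangle$ this yields $G_i\trianglelefteq G_{i+1}$, with $G_{i+1}/G_i$ generated by $G_i\sigma_{i+1}$ and of prime order $q_i$. Finally $A_k=M(G_k)$ has $\dim_M A_k=|G_k|$, whereas $A_k$ being the $G$-crossed product over $\mathrm{Fix}(G)$ with $[M:\mathrm{Fix}(G)]=|G|$ forces $\dim_M A_k=|G|$; hence $G_k=G$, and $\{1\}=G_0\trianglelefteq\cdots\trianglelefteq G_k=G$ is a normal series with cyclic prime-order factors, proving $G$ solvable.
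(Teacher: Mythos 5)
Your proof is correct, and its skeleton coincides with the paper's: both recover a normal series $\{1\}=G_0\trianglelefteq G_1\trianglelefteq\cdots\trianglelefteq G_k=G$ with cyclic prime-order quotients from the given chain of subalgebras, both use the elements $x_\sigma$ of Lemma \ref{lem:Petit (26)} and the identification of $t_i$ (your $\hat t_i$) with $x_{\sigma_{i+1}}$ up to $M^\times$, and both prove normality by the identical conjugation argument: $\tau_i$ maps $A_i$ into itself, so $\hat t_i x_\sigma \hat t_i^{-1}\in A_i$ forces $\sigma_{i+1}\sigma\sigma_{i+1}^{-1}\in G_i$. Where you genuinely deviate is in how the subgroups $G_i$ are introduced and how the decomposition $A_i=\bigoplus_{\sigma\in G_i}Mx_\sigma$ is established. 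The paper sets $G_i=\mathrm{Gal}(M/Z_i)$ with $Z_i=C(A_i)=\bigcap_{j<i}\mathrm{Fix}(\tau_j)$, asserts at the outset that $M/Z_i$ is Galois and that each $A_i$ is a $G_i$-crossed product over $Z_i$ (a claim stated rather than proved, resting on the theory of generalized cyclic algebras), and then obtains $G_{j+1}=G_j\langle\sigma_{j+1}\rangle$ by comparing the two $M$-bases $\{x_{\rho\sigma_{j+1}^i}\}$ and $\{x_\sigma : \sigma\in G_{j+1}\}$ via a coefficient-matching computation, running the induction on solvability itself. You instead define $G_i=\{\sigma\in G : x_\sigma\in A_i\}$ intrinsically (a subgroup by Lemma \ref{lem:Petit (26)}(iii) and finiteness, well-defined since $M^\times\subset A_i$), prove $A_i=M(G_i)$ inductively from the free left $A_i$-module structure, and get $G_{i+1}=\bigcup_j G_i\sigma_{i+1}^j$ together with $[G_{i+1}:G_i]=q_i$ from a dimension count over $M$; the hypothesis that $A_k$ is a $G$-crossed product over $\mathrm{Fix}(G)$ enters only at the very end, to force $G_k=G$. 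Your route is more self-contained and elementary --- it needs neither the centers $Z_i$, nor the Galois theory of the intermediate fields, nor the unproved crossed-product claim --- whereas the paper's route buys the explicit identification $G_i=\mathrm{Gal}(M/Z_i)$, tying the normal series to the tower of centers as in Theorem \ref{thm:Petit (29)}. One caveat applying equally to both arguments: the isomorphism \eqref{eqn:Petit (29) chain of subalgebras Galois 2} is tacitly read as restricting to the identity on $A_i$, so that $I_{\hat t_i}$ induces $\tau_i$ on the actual subalgebra $A_i\subset A_{i+1}$; this is the intended interpretation of the hypothesis, and you are no worse off than the paper here.
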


\begin{proof}
Suppose there exists a  chain of algebras $A_i$, $i \in \{ 0, \ldots, k \}$
satisfying the above assumptions. Put  $G_k=G$.
Since each $A_i$ has center $Z_i = Z_{i-1} \cap \mathrm{Fix}(\tau_{i-1})$, so that by induction
$$Z_i={\rm Fix}(\tau_0) \cap {\rm Fix}(\tau_1)\cap \dots\cap{\rm Fix}(\tau_{i-1})\supset F,$$
  $M/Z_i$ is a Galois extension contained in $A_i$. Put $G_i={\rm Gal}(M/Z_i)$, then
each $A_i$ is a $G_i$-crossed product algebra.
In particular, $G_i$ is a subgroup of $G_{i+1}$.

We use induction to prove that  each $G_i$, thus $G$, is a solvable group.

For $i = 1$,
$$A_1 \cong M[t_0;\sigma_1]/M[t_0;\sigma_1](t_0^{q_0}-c_0)$$
is a cyclic algebra of degree $q_0$ over $\mathrm{Fix}(\sigma_1)$.
 $G_1 = <\sigma_1>$ is a cyclic group of prime order $q_0$ and therefore solvable.

We assume as induction hypothesis that if there exists a  chain
$$M = A_0 \subset \ldots \subset A_j $$
of  algebras such that \eqref{eqn:Petit (29) chain of
subalgebras Galois 2} holds for all $i \in \{ 0, \ldots, j-1 \}$, $j \geq 1$,
then $G_j$ is solvable.

For the induction step we take a chain of  algebras
$M = A_0 \subset \ldots \subset A_j\subset A_{j+1},$
$$A_{i+1} \cong A_i[t_i;\tau_i]/A_i[t_i;\tau_i](t_i^{q_i} -c_i)$$
 where $\tau_i$ is an automorphism
of $A_i$ of inner order $q_i$ which induces an automorphism  $\sigma_{i+1}
\in G$, $c_i \in {\rm Fix}(\tau_i)$ is invertible and $q_i$ is prime, for all $i \in \{ 0, \ldots, j \}$.
  By the induction hypothesis, $G_j$ is a solvable group.

We show that $G_{j+1}$ is solvable: $t_j$ is an invertible element of
$$ A_{j+1}\cong A_j[t_j;\tau_j]/A_j[t_j;\tau_j](t_j^{q_j} - c_j),$$
 with inverse $c_j^{-1}t_j^{q_j-1}$.

$A_{j}$ is a $G_{j}$-crossed product algebra over $Z_{j}$ with maximal subfield
$M$. The $F$-automorphism $\tau_j$ on $A_j$
satisfies $t_j l = \tau_j(l) t_j$ for all $l \in A_j$ which implies
the inner automorphism $$I_{t_j}: A \rightarrow A, \ d \mapsto
t_j d t_j^{-1}$$
restricts to $\tau_j$ on $A_j$ and
to $\sigma_{j+1}$ on $M$.

 For any $\sigma \in G$ there exists an invertible $x_{\sigma} \in
A $ such that
 the inner automorphism
 $$I_{x_{\sigma}}: A \rightarrow A, \ y \mapsto x_{\sigma} y x_{\sigma}^{-1}$$
 restricted to $M$ is $\sigma$.
 Hence we have $x_{\sigma_{j+1}} = t_j$ with $x_{\sigma_{j+1}}$ as defined in Lemma \ref{lem:Petit (26)}. We know that
$\{ 1, t_j, \ldots, {t_j}^{q_j-1} \}$ is a basis for $A_{j+1}$ as a left $A_j$-module.
By \eqref{eqn:M(G) rules (1)}  we have  $x_{\sigma_{j+1}^2} = a_1
{t_j}^2$, $x_{\sigma_{j+1}^3} =a_2 {t_j}^3, \dots$ for suitable $a_i \in M^\times$,
so that w.l.o.g. $\{ 1, x_{\sigma_{j+1}}, \ldots, x_{\sigma_{j+1}^{q_j-1}} \}$ is a
basis for $A_{j+1}$ as a left $A_j$-module.

 Since $A_j$ is a $G_j$-crossed product algebra, it has an $M$-basis
 $\{ x_{\rho} \ \vert \ \rho \in G_j \}$, and hence $A_{j+1}$
has  basis $$\{ x_{\rho} x_{\sigma_{j+1}^i} \ \vert \ \rho \in G_j, \ 0
\leq i \leq q_j - 1 \}$$ as  $M$-vector space.

Additionally,    $x_{\rho} x_{\sigma_{j+1}^i} \in M^{\times} x_{\rho
\sigma_{j+1}^i}$ by Lemma \ref{lem:Petit (26)} (iii) and thus $A_{j+1}$ has the
$M$-basis
\begin{equation}
\{ x_{\rho \sigma_{j+1}^i} \ \vert \ \rho \in G_j, \ 0 \leq i \leq q_j-1
\}.
\end{equation}
 $A_{j+1}$ is a $G_{j+1}$-crossed product algebra and thus also has the $M$-basis
 $\{ x_{\sigma} \ \vert \ \sigma \in G_{j+1} \}$. We use these two basis to show that $G_{j+1} = G_{j} <\sigma_{j+1}>$:
Write
$$x_{\rho \sigma_{j+1}^i} = \sum_{\sigma \in G_{j+1}} m_{\sigma}
x_{\sigma}$$
 for some $m_{\sigma} \in M$, not all zero. Then $$x_{\rho
\sigma_{j+1}^i} m = \sum_{\sigma \in G_{j+1}} m_{\sigma} x_{\sigma} m =
\sum_{\sigma \in G_{j+1}} m_{\sigma} \sigma(m) x_{\sigma},$$ and $$
x_{\rho \sigma_{j+1}^i} m = \rho \sigma_{j+1}^i(m) x_{\rho \sigma_{j+1}^i} =
\rho \sigma_{j+1}^i(m) \sum_{\sigma \in G_{j+1}} m_{\sigma} x_{\sigma},$$
for all $m \in M$. Let $\sigma \in G_{j+1}$ be such that $m_{\sigma}
\neq 0$, then in particular $$m_{\sigma} \sigma(m) x_{\sigma} = \rho
\sigma_{j+1}^i(m)  m_{\sigma} x_{\sigma},$$ for all $m \in M$, that is
$\sigma =  \rho \sigma_{j+1}^i$. This means that $\{  \rho \sigma_{j+1}^i \ |
\ \rho \in G_j, \ 0 \leq i \leq q_j-1 \} \subset G_{j+1}$. Both sets have the same size
so must be equal and we conclude $G_{j+1} = G_j <\sigma_{j+1}>$.

Finally we prove $G_j$ is a normal subgroup of $G_{j+1}$:  the
inner automorphism $I_{x_{\sigma_{j+1}}}$ restricts to the $F$-automorphism
$\tau_j$ of $A_j$. In particular, this implies $$x_{\sigma_{j+1}}
x_{\rho} x_{\sigma_{j+1}}^{-1} \in A_j,$$ for all $\rho \in G_j$.
Furthermore, $$x_{\sigma_{j+1} \rho \sigma_{j+1}^{-1}} \in M^{\times}
x_{\sigma_{j+1}} x_{\rho} x_{\sigma_{j+1}^{-1}} = M^{\times} x_{\sigma_{j+1}}
x_{\rho} x_{\sigma_{j+1}}^{-1} \subset A_j,$$ for all $\rho \in G_j$ by
Lemma \ref{lem:Petit (26)}.

Hence $\sigma_{j+1} \rho \sigma_{j+1}^{-1} \in
G_j$ because $A_j$ is a $G_j$-crossed product algebra. Similarly, we see $\sigma_{j+1}^r \rho \sigma_{j+1}^{-r} \in G_j$ for all
$r \in \mathbb{N}$. Let $g \in G_{j+1}$ be arbitrary and write $g = h
\sigma_{j+1}^r$ for some $h \in G_j$, $r \in \{ 0, \ldots, q_j-1 \}$ which
we can do because $G_{j+1} = G_j <\sigma_{j+1}>$. Then
\begin{align*}
g \rho g^{-1} &= ( h \sigma_{j+1}^r) \rho ( h \sigma_{j+1}^r)^{-1} = h
(\sigma_{j+1}^r \rho \sigma_{j+1}^{-r})h^{-1} \in G_j,
\end{align*}
for all $\rho \in G_j$ so $G_j$ is indeed normal.

It is well known that a group $G$ is solvable if and only if given a
normal subgroup $H$ of $G$, both $H$ and $G/H$ are solvable. It is
clear  now that $G_{j+1}/G_j$ is cyclic and hence solvable, which implies $G_{j+1}$ is
solvable as required.
\end{proof}

\section{Solvable crossed product algebras} \label{sec:crossedproduct}

We keep the assumptions from the previous section, but from now on we  focus on the case that $M/F$ is a Galois extension,
  i.e. now $A$ is a  $G$-crossed product algebra.

We obtain the next result as a special case of Theorem \ref{thm:Petit (29)}:

\begin{theorem}\label{thm:main1}
Let $M/F$ be a Galois field extension of degree $n$ with non-trivial solvable $G = {\rm
Aut}_F(M)$, and $A$  a central simple algebra of degree
$n$ over $F$ with maximal subfield $M$.
 Then $A$  is a $G$-crossed product algebra  and there exists a chain of subalgebras
$$
M = A_0 \subset A_1 \subset \ldots \subset A_k = M(G)= A,
$$
of $A$ which are generalized cyclic algebras of degree
 $\prod_{l=0}^{i-1} q_l$ over $Z_i={\rm Fix}(G_i)$ of the type
$$
A_{i+1} \cong A_i[t_i;\tau_i]/A_i[t_i;\tau_i](t_i^{q_i} - c_i)
$$
for all $i \in \{ 0, \ldots, k-1 \}$, satisfying (i), (ii), (iii) in Theorem \ref{thm:Petit (29)}. Additionally
 the following holds for all $i \in \{ 1, \ldots, k \}$:
\\ (iv)  $Z_{i-1} / Z_i$ has prime degree $q_{i-1}$.
\end{theorem}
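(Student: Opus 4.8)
The plan is to deduce everything from Theorem \ref{thm:Petit (29)}, since the present statement is essentially its specialization to the Galois case, the only genuinely new assertion being (iv). First I would observe that because $M/F$ is now assumed Galois, Theorem \ref{lem:Petit (27)} (ii) applies and gives $A = M(G)$; this simultaneously shows that $A$ is a $G$-crossed product algebra and upgrades the strict inclusion $M(G) \subset A$ appearing in \eqref{eqn:Petit (29) chain of subalgebras} to the equality $A_k = M(G) = A$. With $A_i = M(G_i)$ as in Theorem \ref{thm:Petit (29)}, the chain
$$M = A_0 \subset A_1 \subset \ldots \subset A_k = M(G) = A$$
together with properties (i), (ii), (iii) and the generalized cyclic form $A_{i+1} \cong A_i[t_i;\tau_i]/A_i[t_i;\tau_i](t_i^{q_i}-c_i)$ are then immediate. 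The degree statement, that $A_i$ is a generalized cyclic algebra over $Z_i = {\rm Fix}(G_i)$ of degree $\prod_{l=0}^{i-1} q_l$, I would simply quote from Corollary \ref{cor:dimension and center of M_i} (i).

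The remaining work is (iv), and here I would argue purely by Galois theory. Since $M/F$ is Galois with group $G$, for every subgroup $H \leq G$ the extension $M/{\rm Fix}(H)$ is Galois with group $H$, so $[M : {\rm Fix}(H)] = |H|$. Applying this to the pair $G_{i-1} \leq G_i$ of the normal subseries \eqref{eqn:Subnormal series} gives $[M : Z_{i-1}] = |G_{i-1}|$ and $[M : Z_i] = |G_i|$. As $G_{i-1} \leq G_i$ forces $Z_i = {\rm Fix}(G_i) \subseteq {\rm Fix}(G_{i-1}) = Z_{i-1}$, the tower $Z_i \subseteq Z_{i-1} \subseteq M$ and multiplicativity of degrees yield
$$[Z_{i-1} : Z_i] = \frac{[M : Z_i]}{[M : Z_{i-1}]} = \frac{|G_i|}{|G_{i-1}|} = |G_i/G_{i-1}| = q_{i-1},$$
which is prime because $G_{i-1}$ is normal in $G_i$ with cyclic factor group of prime order $q_{i-1}$ by construction of \eqref{eqn:Subnormal series}. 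This establishes (iv).

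I do not anticipate a genuine obstacle: the whole statement is a corollary, and the only computation beyond quoting Theorem \ref{thm:Petit (29)} and Corollary \ref{cor:dimension and center of M_i} is the elementary field-degree calculation in (iv). The one point requiring care is bookkeeping with the indices of the normal series $\{1\} = G_0 \leq G_1 \leq \ldots \leq G_k = G$, so that the factor group $G_{i+1}/G_i$ of order $q_i$ is matched to the correct pair $(Z_i, Z_{i+1})$ of fixed fields; getting this index shift right is precisely what makes (iv) read as $[Z_{i-1}:Z_i] = q_{i-1}$ rather than $q_i$.
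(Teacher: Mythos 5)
Your proposal is correct, and its first half coincides with the paper's own proof: the paper likewise begins with $A_k = M(G) = A$ via Theorem \ref{lem:Petit (27)} (hence $Z_k = F$), takes the chain and properties (i)--(iii) from Theorem \ref{thm:Petit (29)}, and reads off the degrees from Corollary \ref{cor:dimension and center of M_i}. Where you genuinely diverge is in (iv), the only new content. You obtain it in one line from the Galois correspondence: $M/Z_i$ is Galois with group $G_i$, so $[M:Z_i]=|G_i|$ and $[Z_{i-1}:Z_i]=|G_i|/|G_{i-1}|=q_{i-1}$. The paper argues more indirectly: it uses the Fundamental Theorem of Galois Theory only to see that each $Z_{i-1}/Z_i$ is a \emph{proper} extension, then compares the factorizations $n=[M:F]=\prod_{i=1}^{k}[Z_{i-1}:Z_i]$ and $n=|G|=\prod_{l=0}^{k-1}q_l$, concluding by unique factorization that $[Z_{i-1}:Z_i]=q_{\pi(i-1)}$ for some permutation $\pi$ of $\{0,\dots,k-1\}$; it then pins down $q_{\pi(i-1)}=q_{i-1}$ by a minimal-counterexample argument, showing that if $q_{\pi(j-1)}\neq q_{j-1}$ for a minimal $j$, then $\dim_{Z_j}A_j=[M:Z_j]\,|G_j|$ could not be a perfect square, contradicting $\dim_{Z_j}A_j=\bigl(\prod_{l=0}^{j-1}q_l\bigr)^2$. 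Both arguments are sound. Yours is shorter and more transparent, exploiting the full strength of the correspondence $H\mapsto {\rm Fix}(H)$ (namely $[M:{\rm Fix}(H)]=|H|$), which is indeed available since $M/F$ is Galois; the paper's detour instead extracts the same equality from the constraint that the dimension of a central simple algebra over its center is a square, at the cost of the permutation bookkeeping and the contradiction argument that your approach renders unnecessary.
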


\begin{proof}
$A_k = M(G) = A$ by Theorem \ref{lem:Petit (27)} and thus $Z_k = F$. It remains to prove
\\ (iv): $Z_{i-1} / Z_i$ is a proper field extension for all $i \in \{ 1, \ldots, k \}$ by the
Fundamental Theorem of Galois Theory, because $G_{i-1}$ is a proper
subgroup of $G_i$. We have $n = \vert G \vert = \prod_{l=0}^{k-1} q_l$ is
the decomposition of $n$ as a product of $k$ primes by
\eqref{eqn:dimension and center of M_i 1}. Also $$n = [M:F] = [M:Z_1]
\cdots [Z_{k-1}:F],$$ and so $[Z_{i-1}:Z_i] = q_{\pi(i-1)}$ for all
$i \in \{ 1, \ldots, k \}$ where $\pi$ is a permutation of $\{ 0,
\ldots, k-1 \}$. Hence $[Z_{i-1}:Z_i]$ is prime
 for all $i \in \{ 1, \ldots, k\}$.
 We now prove $q_{i-1} = q_{\pi(i-1)}$ for all $i \in \{
1, \ldots, k \}$: Suppose towards a contradiction $j \in \{ 1,
\ldots, k \}$ is such that $[Z_{j-1}:Z_j] = q_{\pi(j-1)} \neq
q_{j-1}$, where we take $j$ to be as small as possible. The dimension of $A_j$ over $Z_j$ is
$$\prod_{l=0}^{j-1} q_l^2=[M:Z_j] \vert G_j \vert=[M:Z_j]\big(\prod_{l=0}^{j-1} q_l \big).$$
Since
$$[M:Z_j]=\prod_{l=0}^{j-1} [Z_l:Z_{l+1}],$$
we obtain
$$[M:Z_j] \big( \prod_{l=0}^{j-1} q_l \big) = \big( \prod_{l=0}^{j-1}
q_{\pi(l)} \big) \big( \prod_{l=0}^{j-1} q_l \big) = \begin{cases}
\big( \prod_{l=0}^{j-2} q_l \big)^2 q_{\pi(j-1)}q_{j-1} & \text{ if }
j \geq 2 ,\\ q_{\pi(0)} q_0 & \text{ if } j=1,
\end{cases}
$$
 where we have used the minimality of $j$. Since the $q_l$
are prime and $q_{j-1} \neq q_{\pi(j-1)}$, this
implies that the dimension of $A_j$ over $Z_j$ is not a
square, a contradiction. Thus
$[Z_{i-1}:Z_i] = q_{i-1}$ for all $i \in \{1, \ldots, k \}$.
\end{proof}

In general, it is not always easy to decide if a given crossed product algebra is a division algebra or not.

\begin{theorem}\label{thm:div}
In the setup of Theorem \ref{thm:main1}, the solvable crossed product algebra $A$ is a division algebra if
and only if
\begin{equation} \label{eqn:crossed product division condition}
b \tau_i(b) \cdots \tau_i^{q_i -1}(b) \neq c_i
\end{equation}
for all $b \in A_i$ and  $i \in \{ 0, \ldots, k-1 \}$.
\end{theorem}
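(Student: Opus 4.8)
The plan is to argue by induction along the chain $M = A_0 \subset A_1 \subset \cdots \subset A_k = A$ of Theorem~\ref{thm:main1}, reducing the division property of $A$ to an irreducibility statement at each link $A_{i+1} \cong A_i[t_i;\tau_i]/A_i[t_i;\tau_i](t_i^{q_i}-c_i)$. The crucial input is Petit's criterion \cite[(9)]{P66}: once $A_i$ is known to be a division algebra, the finite-dimensional algebra $A_{i+1}=(A_i)_{f_i}$ with $f_i(t_i)=t_i^{q_i}-c_i$ is a division algebra if and only if $f_i$ is irreducible in $A_i[t_i;\tau_i]$. Thus the whole theorem will follow once I show, for $A_i$ a division algebra, that $f_i$ is irreducible exactly when \eqref{eqn:crossed product division condition} holds, i.e. when $c_i$ lies outside the image of the twisted norm $b \mapsto b\tau_i(b)\cdots\tau_i^{q_i-1}(b)$.

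The heart of the argument is therefore the link-wise equivalence, which I would establish in two steps. First, since $q_i$ is prime, reducibility of $f_i$ forces a \emph{linear} factor: as $f_i$ is invariant, $A_{i+1}=A_i[t_i;\tau_i]/A_i[t_i;\tau_i]f_i$ is a central simple algebra, hence isomorphic to $S^{\oplus r}$ as a left module over itself for a simple module $S$; counting $A_i$-dimensions gives $q_i = r\cdot\dim_{A_i}S$, so if $A_{i+1}$ is not a division algebra then $r>1$ and primality forces $r=q_i$ and $\dim_{A_i}S=1$, which corresponds precisely to a one-sided linear factor of $f_i$. Second, I would compute the relevant one-sided remainder of $t_i^{q_i}-c_i$ upon division by a linear polynomial: a short induction on the degree shows that this remainder is $c_i$ subtracted from a product of $\tau_i$-iterates of the root, so that $f_i$ admits a linear factor if and only if $c_i = b\tau_i(b)\cdots\tau_i^{q_i-1}(b)$ for some $b\in A_i$. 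Matching the side conventions of the decomposition \eqref{eqn:Petit (29) chain of subalgebras 2} then yields exactly the twisted norm appearing in \eqref{eqn:crossed product division condition}.

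Assembling the two directions is then routine. For the forward implication, if $A$ is a division algebra then each subalgebra $A_i$ is a finite-dimensional subalgebra without zero divisors, hence itself a division algebra, so each $f_i$ is irreducible and \eqref{eqn:crossed product division condition} holds at every level. For the converse I would induct: $A_0=M$ is a field, and if $A_i$ is a division algebra and \eqref{eqn:crossed product division condition} holds at level $i$, then the link-wise equivalence makes $f_i$ irreducible and Petit's criterion makes $A_{i+1}$ a division algebra; iterating up to $A_k=A$ finishes the proof. I expect the main obstacle to be the second step of the link-wise equivalence, namely pinning down the correct one-sided remainder and the bookkeeping of the $\tau_i$-conjugates so that the product comes out in the order $b\tau_i(b)\cdots\tau_i^{q_i-1}(b)$ displayed in \eqref{eqn:crossed product division condition}, using that $\tau_i$ has inner order $q_i$ with $\tau_i^{q_i}=I_{c_i}$; by contrast the prime-degree reduction to a linear factor and the invocation of \cite[(9)]{P66} are comparatively direct.
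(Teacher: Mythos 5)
Your outline reproduces the skeleton of the paper's proof exactly: the forward direction rests on the fact that subalgebras of a finite-dimensional division algebra are division algebras, and the converse is an induction along the chain of Theorem \ref{thm:main1}, passing from ``$A_i$ division'' to ``$A_{i+1}$ division'' via irreducibility of $f_i=t_i^{q_i}-c_i$ and the fact that $\tau_i^{q_i}=I_{c_i}$. The genuine difference is how the link-wise equivalence between irreducibility of $f_i$ and condition \eqref{eqn:crossed product division condition} is handled: the paper does not prove it at all but quotes it from Jacobson \cite[1.3.16]{J96}, whereas you propose to reprove it. Your reduction to a linear factor is sound: $A_{i+1}$ is central simple, so $A_{i+1}\cong S^{\oplus r}$ as a left module over itself, $q_i=r\cdot\dim_{A_i}S$, and primality of $q_i$ forces $r=q_i$ and $\dim_{A_i}S=1$, i.e. a monic right linear factor of $f_i$; this is essentially the standard argument underlying Jacobson's theorem.

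The gap is exactly at the step you dismiss as bookkeeping. With the convention $t_ia=\tau_i(a)t_i$, the remainder of $t_i^{q_i}-c_i$ under right division by $t_i-b$ is $\tau_i^{q_i-1}(b)\cdots\tau_i(b)b-c_i$, so your computation yields the criterion with the twists in \emph{decreasing} order: $A_{i+1}$ fails to be division iff $c_i=\tau_i^{q_i-1}(b)\cdots\tau_i(b)b$ for some $b\in A_i$. (Left division does not help: it gives $c_i=b\tau_i^{-1}(b)\cdots\tau_i^{-(q_i-1)}(b)$, which after applying $\tau_i^{q_i-1}$ and using $\tau_i(c_i)=c_i$ is the same decreasing-order condition.) The theorem asserts the \emph{increasing} order $b\tau_i(b)\cdots\tau_i^{q_i-1}(b)$. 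For $i=0$ the two agree since $A_0=M$ is commutative, but for $i\geq 1$ the $A_i$ are noncommutative and $c_i$ is in general not central in $A_i$ (indeed $\tau_i^{q_i}=I_{c_i}$ is a nontrivial inner automorphism), and then no substitution $b\mapsto\tau_i^{j}(b^{\pm 1})$ carries one product into the other: every such attempt forces $bc_i=c_ib$. For instance, for $q_i=2$, from $c_i=\tau_i(b)b$ one only gets $b\tau_i(b)=bc_ib^{-1}$, a conjugate of $c_i$ rather than $c_i$ itself. The two conditions are in fact both equivalent to $A_{i+1}$ being a division algebra, and hence to each other --- but that equivalence \emph{is} the content of the cited criterion, not a side convention: it needs a genuine argument. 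So as written your plan proves the theorem with the reversed norm condition; to obtain \eqref{eqn:crossed product division condition} in the stated form you must either supply that additional argument or fall back on the citation of \cite[1.3.16]{J96} that the paper uses.
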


\begin{proof}
If $A$ is a division algebra then so are all the
subalgebras $A_{i}$,  $i \in \{ 0, \ldots, k-1 \}$. In particular,
this means that $t_i^{q_i} - c_i \in A_i[t_i;\tau_i]$ is an irreducible twisted polynomial
for all $i \in \{ 0, \ldots, k-1 \}$, i.e.
$$b \tau_i(b) \cdots
\tau_i^{q_i -1}(b) \neq c_i$$ for all $b \in A_i$ \cite[1.3.16]{J96}.

Conversely suppose \eqref{eqn:crossed product division condition}
holds for all $b \in A_i$ and  $i \in \{ 0, \ldots, k-1 \}$. We
prove by induction that then $A_i$ is a division algebra for all $i\in \{ 0, \ldots, k\}$, thus in particular
so is $A= A_k$: $A_0 = M$ is a field. Assume as induction hypothesis that $A_j$
is a division algebra  for some $j \in \{ 0, \ldots, k-1 \}$.
By the proof of Theorem \ref{thm:Petit (29)}, $\tau_j^{q_j}$ is
the inner automorphism $I_{c_j} ( z)= c_j z c_j^{-1}$ on $A_j$. Therefore
$$A_{j+1} \cong
A_j[t_j;\tau_j]/A_j[t_j;\tau_j](t_j^{q_j} - c_j)$$ is a division
algebra since $t_j^{q_j} - c_j \in A_j[t_j;\tau_j]$ is irreducible by
\cite[1.3.16]{J96}, because by assumption
$$b \tau_j(b) \cdots \tau_j^{q_j -1}(b) \neq c_j,$$ for all $b \in A_j$.
Thus $A_i$ is a division algebra for all $i \in \{ 0, \ldots, k \}$ by induction.
\end{proof}

The next result follows from Theorem \ref{thm:Petit (29) Generalised}. It generalizes \cite[(9)]{P68} and characterizes
 solvable crossed product algebras via generalized cyclic algebras:

\begin{corollary} \label{thm:Petit (29) Galois version}
Let $A$ be a crossed product algebra of degree $n$ over $F$ with
maximal subfield $M$ such that $M/F$ is a Galois field extension.
Then $G = {\rm Gal}(M/F)$ is solvable if there exists a chain of  subalgebras
$$M = A_0 \subset A_1 \subset \ldots \subset A_k = A$$
of $A$ which all have maximal subfield $M$, and are generalized cyclic algebras
$$A_{i+1} \cong A_i[t_i;\tau_i]/A_i[t_i;\tau_i](t_i^{q_i} - c_i),$$
over their centers  for all $i \in \{ 0, \ldots, k-1 \}$, where
  $q_i$ is a prime,
 $\tau_i$ is an $F$-automorphism of $A_i$ of inner order $q_i$ which restricts to an
automorphism $\sigma_{i+1} \in G$,
and $c_i \in {\rm Fix}(\tau_i)^\times$.
\end{corollary}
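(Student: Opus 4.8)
The plan is to deduce this as an immediate specialization of Theorem \ref{thm:Petit (29) Generalised} to the Galois situation. That theorem already proves the implication ``existence of such a chain $\Rightarrow$ $G$ solvable'' under the weaker hypothesis that only the top algebra $A_k$ of the chain is a $G$-crossed product over ${\rm Fix}(G)$, possibly sitting strictly inside $A$. Since the entire inductive solvability argument (showing $G_{j+1} = G_j\langle\sigma_{j+1}\rangle$, that $G_j \triangleleft G_{j+1}$, and that each quotient is cyclic) has been carried out there in full generality, the only work left is to check that the present Galois hypotheses fit that framework, with the extra feature that the chain now exhausts all of $A$.

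First I would record the Galois input: because $M/F$ is a Galois field extension, the group $G = {\rm Aut}_F(M)$ coincides with ${\rm Gal}(M/F)$, and its fixed field is ${\rm Fix}(G) = F$. Hence the field over which $A$ is central simple is exactly ${\rm Fix}(G)$. Next I would identify the top of the chain: by hypothesis $A$ is a crossed product algebra with maximal subfield $M$ and $M/F$ Galois, so Theorem \ref{lem:Petit (27)}(ii) gives $A = M(G)$. Thus $A_k = A$ is precisely the $G$-crossed product algebra $(M,G,\mathfrak{a})$ over ${\rm Fix}(G) = F$, and the given chain
$$M = A_0 \subset A_1 \subset \ldots \subset A_k = A,$$
with each step a generalized cyclic algebra $A_{i+1} \cong A_i[t_i;\tau_i]/A_i[t_i;\tau_i](t_i^{q_i} - c_i)$ satisfying conditions (i)--(iii), is exactly a chain of the kind required in Theorem \ref{thm:Petit (29) Generalised}; the containment $A_k \subset A$ appearing there is here the equality $A_k = A$.

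Applying Theorem \ref{thm:Petit (29) Generalised} to this chain then yields that $G$ is solvable, which is the assertion. The only point requiring any care is the identification $A_k = A = M(G)$ over the correct center ${\rm Fix}(G) = F$; once this is secured through the Galois hypothesis together with Theorem \ref{lem:Petit (27)}(ii), there is no genuine obstacle to overcome, and the conclusion follows directly.
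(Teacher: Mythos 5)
Your proposal is correct and takes essentially the same route as the paper, which derives this corollary purely as a specialization of Theorem \ref{thm:Petit (29) Generalised} (the paper simply states that it ``follows from'' that theorem, giving no further argument). The details you supply --- that $M/F$ Galois forces ${\rm Fix}(G)=F$, and that $A_k=A=M(G)$ is then a $G$-crossed product over ${\rm Fix}(G)$ by Theorem \ref{lem:Petit (27)}(ii), so the chain meets the hypotheses of that theorem with the containment $A_k\subset A$ becoming equality --- are exactly the unstated verifications behind the paper's one-line deduction.
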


 \begin{remark}
Let $M/F$ be a finite Galois field extension with non-trivial solvable Galois group $G$ and $A$ a solvable crossed
product algebra over $F$ with maximal subfield $M$.

 A close inspection of Albert's proof \cite[p.~182-187]{albert1939structure} shows that he constructs
 the same chain of algebras
$$A_{i+1} \cong A_i[t_i;\tau_i]/A_i[t_i;\tau_i](t_i^{q_i} -c_i)$$
inside a solvable crossed product $A$ as we obtain in Theorem \ref{thm:main1}, but they are
not explicitly identified as generalized cyclic algebras.
 We also obtain a converse of Albert's statement (Corollary \ref{thm:Petit (29) Galois version}).
 \end{remark}

 Theorem \ref{thm:main1} also tells us something about the existence of $n$-central elements in
 a solvable crossed product algebra $A$, as $t_{k-1}$ is a
 $q_{k-1}$-central element in $A$. Recall that
 for a central simple algebra $A$ over $F$ whose degree is a multiple of $n$,
 $u\in A\setminus F$ is called an \emph{$n$-central element} if $u^n\in F^\times$ and
 $u^m\not\in F$ for all $1\leq m<n$. The
$n$-central elements play an important role in the structure of central simple algebras.

\begin{corollary} \label{cor:Petit (29) Galois version}
 Let $A$ be a solvable $G$-crossed product division algebra over $F$. Then
$$A \cong D[t;\tau]/D[t;\tau](t^{q} - c)=(D,\tau, c)$$ is a
generalized cyclic algebra, where $D$ is either a central simple
algebra over its center and $\tau$ a suitable automorphism of $D$ of finite inner order which is a prime $q$,
 or $D$ is a cyclic Galois field extension of $F$  of prime degree $q$ with Galois group $G=<\tau>$.
\\  $A$ contains a $q$-central element.
\end{corollary}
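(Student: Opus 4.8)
The plan is to deduce this corollary directly from Theorem~\ref{thm:main1} by examining the top of the chain of subalgebras. Since $A$ is a solvable $G$-crossed product division algebra over $F$ with maximal subfield $M$, Theorem~\ref{thm:main1} gives a chain
$$M = A_0 \subset A_1 \subset \ldots \subset A_k = A,$$
in which the final step is
$$A = A_k \cong A_{k-1}[t_{k-1};\tau_{k-1}]/A_{k-1}[t_{k-1};\tau_{k-1}](t_{k-1}^{q_{k-1}} - c_{k-1}).$$
So I would set $D = A_{k-1}$, $\tau = \tau_{k-1}$, $q = q_{k-1}$ and $c = c_{k-1}$, and read off that $A \cong (D,\tau,c)$ is a generalized cyclic algebra, with $\tau$ an $F$-automorphism of $D$ of inner order the prime $q$ and $c \in {\rm Fix}(\tau)^\times$, precisely as supplied by parts (i), (ii), (iii) of Theorem~\ref{thm:Petit (29)}.

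Next I would verify the dichotomy in the description of $D$. There are two cases according to the length of the chain. If $k = 1$, then $D = A_0 = M$, and since $A_1 = M(G_1)$ is a cyclic algebra of prime degree $q_0$ over ${\rm Fix}(\sigma_1)$ with $G_1 = \langle \sigma_1\rangle$ (as in the first paragraph of the proof of Theorem~\ref{thm:Petit (29)}), here $M = D$ is a cyclic Galois field extension of prime degree $q$ over $F = {\rm Fix}(G) = {\rm Fix}(\sigma_1)$ with Galois group $\langle\tau\rangle$. If instead $k \geq 2$, then $D = A_{k-1}$ is a generalized cyclic algebra over its center $Z_{k-1}$ by Corollary~\ref{cor:dimension and center of M_i}, hence a central simple algebra over its center that is strictly larger than a field (its degree $\prod_{l=0}^{k-2}q_l$ exceeds $1$); this is the first alternative. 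Since $A$ is a division algebra, so is its subalgebra $D$, as required for the generalized cyclic algebra notation $(D,\tau,c)$ from Section~\ref{sec:gca}.

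Finally, for the $q$-central element, I would take $u = t_{k-1}$, viewed inside $A$ via the isomorphism above. By construction $u^{q} = t_{k-1}^{q_{k-1}} = c_{k-1} \in {\rm Fix}(\tau_{k-1})^\times \subset D^\times$, so in particular $u^q$ lies in the center $F$ once one checks centrality. Here I would invoke that $c_{k-1} = x_{\sigma_k}^{q_{k-1}}$ with $\sigma_k^{q_{k-1}} \in G_{k-1}$, so that $u^q = c_{k-1}$ is fixed by $\tau_{k-1}$ and, being a central element relative to the full crossed product structure, lies in $F = Z_k$; and that no smaller power $u^m$ with $1 \le m < q$ can lie in $F$, since $\{1, t_{k-1}, \ldots, t_{k-1}^{q-1}\}$ is a free left $D$-basis of $A$, so $u^m \notin D \supsetneq F$ for such $m$. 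I expect the main obstacle to be this last point: pinning down that $u^q$ lands exactly in the base field $F$ rather than merely in ${\rm Fix}(\tau)$, and confirming the genuine $q$-centrality (no lower power being central), which requires combining the basis freeness with the identity $Z_k = F$ from Theorem~\ref{thm:main1}.
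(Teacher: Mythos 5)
Your handling of the first assertion is correct and is exactly the paper's (one-line) argument: read off the top step of the chain in Theorem \ref{thm:main1}, set $D=A_{k-1}$, $\tau=\tau_{k-1}$, $q=q_{k-1}$, $c=c_{k-1}$, split into the cases $k=1$ and $k\geq 2$ for the dichotomy, and note that a subalgebra of a division algebra is division. Your basis-freeness argument also correctly gives $t_{k-1}^m\notin D\supset F$ for $1\leq m<q$.

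The genuine gap is the step you yourself flagged: the claim $u^q=c_{k-1}\in F^\times$. Your proposed justification --- that $c_{k-1}$, ``being a central element relative to the full crossed product structure, lies in $F=Z_k$'' --- is not valid, and the claim is in fact false in general for this element. Membership in ${\rm Fix}(\tau_{k-1})^\times$ only says $\tau_{k-1}(c_{k-1})=c_{k-1}$; it does not place $c_{k-1}$ in $C(A_{k-1})=Z_{k-1}$, let alone in $F=Z_{k-1}\cap {\rm Fix}(\tau_{k-1})$. Concretely, the proof of Theorem \ref{thm:Petit (29)} gives $c_{k-1}=x_{\sigma_k}^{q_{k-1}}\in M^\times x_{\sigma_k^{q_{k-1}}}$, so by Lemma \ref{lem:Petit (26)} conjugation by $c_{k-1}$ acts on $M$ as $\sigma_k^{q_{k-1}}\in G_{k-1}$; hence $c_{k-1}$ cannot lie in $F$ unless $\sigma_k^{q_{k-1}}={\rm id}$, i.e.\ unless the generating coset of $G/G_{k-1}$ admits a representative of order exactly $q_{k-1}$. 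This can be impossible: for $G\cong\mathbb{Z}/4\mathbb{Z}$ every element outside $G_1$ has order $4$, and in a cyclic quartic division algebra $A=(M/F,\sigma,a)$ the chain yields $t_1=x_{\sigma}$ with $t_1^2=x_{\sigma}^2\notin F$ (it conjugates $M$ by $\sigma^2\neq {\rm id}$), while $t_1^4=a\in F^\times$; so $t_1$ is $4$-central but not $2$-central. You should know that the paper's own proof is no better at this point: it simply asserts ``$t^q\in F^\times$'' with no justification, so you have faithfully reconstructed the paper's intended argument, including its unproved step; the obstacle you identified is a real gap that your patch does not close and that the paper leaves open as well (for $q=2$ and ${\rm char}\,F\neq 2$ one can instead extract a $2$-central element from the quadratic subfield $Z_{k-1}/F$, but that is a different element and a different argument).
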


\begin{proof}
The first assertion follows directly from Theorem \ref{thm:main1}.
 In particular, then  $t\in A$ is a non-central element such that $t^{q}\in  F^\times$
  and $t^{m}\not\in F$ for all $1\leq m<q$.
\end{proof}

\section{Some simple consequences for admissible groups} \label{sec:app}

A finite group $G$ is called \emph{admissible} over a field $F$, if
there exists a  $G$-crossed product division algebra over $F$
\cite{schacher1968subfields}.

Suppose $G$ is a finite solvable group, so that we have a chain of normal subgroups
$$\{ 1\} = G_0 \leq \ldots \leq G_k =
G,$$
 where $G_{j} \triangleleft G_{j+1}$ and $G_{j+1} / G_{j}$ is cyclic of prime order $q_{j}$ for all
$j \in \{ 0, \ldots, k-1 \}$ as in (\ref{eqn:Subnormal series}) and (\ref{eqn:proof petit (29) 1}).

Suppose $G$ is admissible over  $F$. Then  Theorem \ref{thm:main1} shows that the subgroups $G_i$ of $G$ appearing in the chain of
normal subgroups of $G$ are admissible over suitable intermediate fields of $M/F$:

\begin{theorem} \label{thm:subgroups of G are admissible}
 Suppose $G$ is admissible over a field $F$. Then each $G_i$ in the above chain
is admissible over the intermediate field $Z_i={\rm Fix}(G_i)$ of
$M/F$
 and
 $$[Z_i:F] =\prod_{j=i}^{k-1} q_j,$$
  $i \in \{ 1, \ldots, k \}$.
  In particular, $G_{k-1}$ is admissible over   $Z_{k-1}={\rm Fix}(G_{k-1})$
  which has prime degree $q_{k-1}$ over $F$.
\end{theorem}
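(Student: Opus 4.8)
The plan is to read off the statement directly from Theorem \ref{thm:main1}. Since $G$ is admissible over $F$, by definition there is a $G$-crossed product \emph{division} algebra $A$ over $F$ containing a maximal subfield $M$ with $M/F$ Galois and ${\rm Gal}(M/F)=G$. Fixing the normal series \eqref{eqn:Subnormal series} of the solvable group $G$, Theorem \ref{thm:main1} produces the chain $M=A_0\subset A_1\subset\ldots\subset A_k=A$ in which each $A_i=M(G_i)$ is a $G_i$-crossed product algebra over $Z_i={\rm Fix}(G_i)$ with maximal subfield $M$; by Corollary \ref{cor:dimension and center of M_i} the extension $M/Z_i$ is Galois with Galois group $G_i$. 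So each $A_i$ is already known to be a $G_i$-crossed product algebra over the intermediate field $Z_i$, and the whole point is to upgrade ``crossed product algebra'' to ``crossed product \emph{division} algebra''.

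The key step is therefore to check that each $A_i$ is itself a division algebra, which is exactly what is needed to witness the admissibility of $G_i$ over $Z_i$. I would use the elementary fact that a finite-dimensional unital subalgebra $B$ of a division algebra $A$ is again a division algebra: for nonzero $b\in B$, left multiplication by $b$ is injective on $A$, hence injective on $B$, hence bijective by finite-dimensionality, so $b$ has a two-sided inverse lying in $B$. Applying this to $B=A_i\subset A$ shows $A_i$ is a $G_i$-crossed product division algebra over $Z_i$, which is precisely the assertion that $G_i$ is admissible over $Z_i$.

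It remains to compute the degrees $[Z_i:F]$. Because $M/F$ is Galois we have $Z_k=F$ by Theorem \ref{lem:Petit (27)}, while part (iv) of Theorem \ref{thm:main1} gives $[Z_{l-1}:Z_l]=q_{l-1}$ for each $l$, i.e. $[Z_l:Z_{l+1}]=q_l$. Telescoping the tower $Z_i\supset Z_{i+1}\supset\ldots\supset Z_k=F$ then yields
\[
[Z_i:F]=\prod_{l=i}^{k-1}[Z_l:Z_{l+1}]=\prod_{l=i}^{k-1}q_l,
\]
as claimed. Specializing to $i=k-1$ gives $[Z_{k-1}:F]=q_{k-1}$, which is prime, so $G_{k-1}$ is admissible over the degree-$q_{k-1}$ extension $Z_{k-1}$ of $F$. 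I do not expect a genuine obstacle here, since the content is bookkeeping layered on top of Theorem \ref{thm:main1}; the only point demanding care is the ``subalgebra of a division algebra is a division algebra'' step, which must invoke finite-dimensionality, as without it the inverse of $b$ need not lie in $B$.
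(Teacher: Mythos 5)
Your proposal is correct and follows essentially the same route as the paper: both deduce the result by applying Theorem \ref{thm:main1} to a $G$-crossed product division algebra witnessing admissibility, identifying each $A_i$ as a $G_i$-crossed product division algebra over $Z_i$ with maximal subfield $M$ (where $M/Z_i$ is Galois with group $G_i$). The only difference is that you make explicit two points the paper leaves implicit --- the finite-dimensional ``subalgebra of a division algebra is a division algebra'' lemma and the telescoping degree computation from $[Z_{l}:Z_{l+1}]=q_{l}$ and $Z_k=F$ --- which is a reasonable amount of added care, not a different argument.
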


\begin{proof}
As $G$ is $F$-admissible there exists a $G$-crossed product division
algebra $A$ over $F$ and a chain of generalized cyclic division algebras
$$M = A_0\subset \ldots \subset A_k = A$$
over $F$, such that
\begin{equation}
A_{i+1} \cong A_i[t_i;\tau_i]/A_i[t_i;\tau_i](t_i^{q_i} -c_i)
\end{equation}
for all $i \in \{ 0, \ldots, k-1 \}$, where $\tau_i$ is an
automorphism of $A_i$ of inner order $q_i$ which restricts to an
automorphism $\sigma_{i+1} \in G$ and $c_i \in \mathrm{Fix}(\tau_i)$ is
invertible (Theorem \ref{thm:main1}).
 $A_i$ is a $G_i$-crossed product division algebra over $Z_i$ with maximal
subfield $M$ and $M/Z_i$ is a Galois field extension with
$\mathrm{Gal}(M/Z_i) = G_i$, i.e. $G_i$ is $Z_i$-admissible.
\end{proof}

\begin{example}
Let $G = \bf  S_4$, then $G$ is $\mathbb{Q}$-admissible \cite[Theorem
7.1]{schacher1968subfields}, so there exists a
 central simple division algebra $D$ over $\mathbb{Q}$ with
maximal subfield $M$, such that $M/F$ is a  Galois field
extension and ${\rm Gal}(M/F) = G$ is a finite solvable group. Let
 $$\{ {\rm id} \} \lhd <(12)(34)> \lhd \bf  K \lhd \bf  A_4 \lhd S_4$$
be its subnormal series, where $\bf K$ is the Klein four-group and $\bf A_4$ is the
alternating group, and
$${\bf S_4/A_4} \cong
\mathbb{Z}/2\mathbb{Z}, {\bf  A_4/K }\cong \mathbb{Z}/3\mathbb{Z}, \
{\bf K}/<(12)(34)> \cong \mathbb{Z}/2\mathbb{Z}, \ <(12)(34)>/\{ {\rm id}\} \cong \mathbb{Z}/2\mathbb{Z}.$$
By Corollary \ref{thm:Petit (29)
Galois version}, there exists a corresponding chain of division algebras
 $$M = A_0 \subset A_1 \subset A_2 \subset A_3 \subset A_4 =
D$$ over $\mathbb{Q}$, such that
$$A_{i+1} \cong A_i[t_i;\tau_i]/A_i[t_i;\tau_i](t_i^{q_i} - c_i)$$
is a generalized
cyclic division algebra over its center for all $i \in \{ 0, 1, 2, 3
\}$, where $\tau_i$ is an automorphism of $A_i$, whose restriction to
$M$ is $\sigma_{i+1} \in G$, $c_i \in {\rm Fix}(\tau_i)$, and $\tau_i$
has inner order $2, 2, 3, 2$ for $i = 0, 1, 2, 3$ respectively.
Moreover, we have $q_0 = q_1 = q_3 = 2$ and $q_2 = 3$, and $A_i$ has
degree $\prod_{l=0}^{i-1} q_l$ over its center $Z_i$ for all $i \in
\{ 1, 2, 3, 4 \}$ by Theorem \ref{thm:main1}. In addition, by Theorem
\ref{thm:subgroups of G are admissible} we conclude:
\begin{itemize}
\item[(i)] $\bf A_4$ is admissible over the quadratic field extension $Z_3={\rm Fix}({\bf A_4})\subset M$ of $\mathbb{Q}$.
\item[(ii)] $\bf K$ is admissible over the field extension $Z_2={\rm Fix}({\bf K})\subset M$  of $\mathbb{Q}$ of degree 6.
\item[(iii)] $<(12)(34)>$ is admissible over  the  field extension $Z_1={\rm Fix}(<(12)(34)>)\subset M$  of
$\mathbb{Q}$ of degree 12.
\end{itemize}
\end{example}

 Schacher proved that for every finite group $G$, there
exists an algebraic number field $F$ such that $G$ is admissible over
$F$ \cite[Theorem 9.1]{schacher1968subfields}. Combining this with
Theorem \ref{thm:main1} we obtain:

\begin{corollary}
Let $G$ be a finite solvable group. Then there exists an algebraic
number field $F$ and a  $G$-crossed product
division algebra $A$ over $F$. Furthermore, there exists a chain of
 division algebras
 $$M = A_0 \subset A_1 \subset \ldots \subset A_k = A$$
 over $F$, such that
 $$A_{i+1} \cong A_i[t_i;\tau_i]/A_i[t_i;\tau_i](t_i^{q_i} - c_i)$$
is a generalized cyclic algebra over its center $Z_i$ for all $i \in
\{ 0, \ldots, k-1 \}$,  satisfying the properties listed in
Theorems \ref{thm:Petit (29)}  and \ref{thm:main1}.
\\ In particular, each $G_i$ is admissible over $Z_i$.
\end{corollary}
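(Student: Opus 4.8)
The plan is to combine Schacher's realization theorem with the structural Theorem~\ref{thm:main1}, so that the corollary becomes essentially a bookkeeping exercise. First I would invoke \cite[Theorem 9.1]{schacher1968subfields} to produce an algebraic number field $F$ over which the given finite solvable group $G$ is admissible; by the definition of admissibility this yields a $G$-crossed product division algebra $A$ over $F$, together with a maximal subfield $M$ such that $M/F$ is Galois with ${\rm Gal}(M/F)\cong G$.

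Next, since $G$ is solvable by hypothesis, I would fix a normal subseries as in \eqref{eqn:Subnormal series} and apply Theorem~\ref{thm:main1} to $A$. Because $M/F$ is Galois and $G={\rm Gal}(M/F)$ is non-trivial solvable, this theorem produces the chain $M=A_0\subset A_1\subset\cdots\subset A_k=M(G)=A$ in which each step is a generalized cyclic algebra $A_{i+1}\cong A_i[t_i;\tau_i]/A_i[t_i;\tau_i](t_i^{q_i}-c_i)$ over its center $Z_i={\rm Fix}(G_i)$, with $q_i$ prime, $\tau_i$ an $F$-automorphism of $A_i$ of inner order $q_i$ restricting to $\sigma_{i+1}\in G$, and $c_i\in{\rm Fix}(\tau_i)^\times$. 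Thus the properties listed in Theorems~\ref{thm:Petit (29)} and \ref{thm:main1} hold verbatim for this chain.

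It then remains to check that every $A_i$ is itself a division algebra and to deduce the admissibility of each $G_i$. The first point is immediate: each $A_i$ is a finite-dimensional unital subalgebra of the division algebra $A$, so for any nonzero $b\in A_i$ the left multiplication map $A_i\to A_i$, $x\mapsto bx$, is injective (as $A$ has no zero divisors) and hence, by finite-dimensionality, bijective; the same holds for right multiplication, so $b$ is invertible in $A_i$ and $A_i$ is a division algebra. For the second point, each $A_i$ is by Theorem~\ref{thm:main1} a $G_i$-crossed product division algebra over $Z_i$ with $M/Z_i$ Galois and ${\rm Gal}(M/Z_i)=G_i$, whence $G_i$ is admissible over $Z_i$ directly from the definition; this is precisely the content recorded in Theorem~\ref{thm:subgroups of G are admissible}.

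The one place requiring a moment's care is verifying that the output of Schacher's theorem meets the hypotheses of Theorem~\ref{thm:main1} --- namely that $A$ is a central simple $F$-algebra of degree $n=[M:F]$ possessing a maximal subfield $M$ with $M/F$ Galois and $G={\rm Aut}_F(M)$ non-trivial solvable. Since a $G$-crossed product division algebra is by definition central simple of degree $\lvert G\rvert$ and contains such an $M$ with ${\rm Aut}_F(M)=G={\rm Gal}(M/F)$, these hypotheses are automatically satisfied, and no genuine obstacle arises; the corollary follows by assembling the three cited results.
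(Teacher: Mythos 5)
Your proposal is correct and follows essentially the same route as the paper: invoke Schacher's realization theorem \cite[Theorem 9.1]{schacher1968subfields} to get $F$ and a $G$-crossed product division algebra $A$, then apply Theorem~\ref{thm:main1} (the paper cites it via Corollary~\ref{thm:Petit (29) Galois version}) to obtain the chain, with the admissibility of each $G_i$ over $Z_i$ being exactly the content of Theorem~\ref{thm:subgroups of G are admissible}. Your extra verifications (that finite-dimensional subalgebras of a division algebra are division algebras, and that Schacher's output meets the hypotheses of Theorem~\ref{thm:main1}) are details the paper leaves implicit, but they do not change the argument.
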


\begin{proof}
Such a field $F$ and division algebra $D$ exist by \cite[Theorem
9.1]{schacher1968subfields}. The assertion follows by Corollary
\ref{thm:Petit (29) Galois version}.
\end{proof}

In \cite[Theorem 1]{sonn1983admissibility}, Sonn proved that a finite
solvable group is admissible over $\mathbb{Q}$ if and only if all its
Sylow subgroups are metacyclic, i.e. if every Sylow subgroup $H$ of
$G$ has a cyclic normal subgroup $N$, such that $H/N$ is also cyclic.
Combining this with Theorem \ref{thm:main1} we
conclude:

\begin{corollary} \label{cor:Q-admissible solvable petit (29) construction}
Let $G$ be a finite solvable group such that all its Sylow subgroups
are metacyclic. Then there exists a $G$-crossed
product division algebra $A$ over $\mathbb{Q}$, and a chain of division algebras
$$M = A_0 \subset A_1 \subset\ldots\subset A_k = A$$
over $\mathbb{Q}$, such that $$A_{i+1} \cong
A_i[t_i;\tau_i]/A_i[t_i;\tau_i](t_i^{q_i} - c_i)$$ is a generalized
cyclic algebra over its center $Z_i$ for all $i \in \{ 0, \ldots, k-1 \}$
satisfying the properties listed in Theorems \ref{thm:Petit (29)} and
\ref{thm:main1}.
\\ In particular, each $G_i$ is admissible over the field extension $Z_i$  of $\mathbb{Q}$.
\end{corollary}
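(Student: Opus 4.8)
The plan is to combine Sonn's characterization of $\mathbb{Q}$-admissible solvable groups with the chain decomposition already established in Theorem \ref{thm:main1}, exactly as the previous corollary combined Schacher's existence theorem with it. First I would invoke \cite[Theorem 1]{sonn1983admissibility}: since $G$ is finite solvable and, by hypothesis, all its Sylow subgroups are metacyclic, $G$ is admissible over $\mathbb{Q}$. By the very definition of admissibility this furnishes a $G$-crossed product division algebra $A$ over $\mathbb{Q}$; in particular $A$ has a maximal subfield $M$ with $M/\mathbb{Q}$ Galois and $\mathrm{Gal}(M/\mathbb{Q}) = G$ a non-trivial solvable group.

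Next I would feed this $A$ into Theorem \ref{thm:main1}. Its hypotheses are met verbatim: $M/\mathbb{Q}$ is a Galois field extension with solvable $G = \mathrm{Aut}_\mathbb{Q}(M)$, and $A$ is a central simple algebra of degree $[M:\mathbb{Q}]$ with maximal subfield $M$. Theorem \ref{thm:main1} then produces the chain
$$M = A_0 \subset A_1 \subset \ldots \subset A_k = A$$
of generalized cyclic algebras $A_{i+1} \cong A_i[t_i;\tau_i]/A_i[t_i;\tau_i](t_i^{q_i} - c_i)$ over their centers $Z_i = {\rm Fix}(G_i)$, satisfying conditions (i)--(iii) of Theorem \ref{thm:Petit (29)} together with property (iv) of Theorem \ref{thm:main1}. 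Because $A$ is a division algebra and each $A_i$ is a subalgebra of $A$, every $A_i$ is automatically a division algebra, so the chain is a chain of generalized cyclic \emph{division} algebras as claimed.

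The concluding \emph{in particular} assertion then follows directly from Theorem \ref{thm:subgroups of G are admissible}: each $A_i$ is a $G_i$-crossed product division algebra over $Z_i$ with maximal subfield $M$, and $M/Z_i$ is Galois with $\mathrm{Gal}(M/Z_i) = G_i$, whence $G_i$ is admissible over $Z_i$. Since every step is a direct appeal to an already-proved result, I expect no genuine obstacle here; the only point deserving a moment's care is confirming that Sonn's metacyclic-Sylow hypothesis is precisely what supplies $\mathbb{Q}$-admissibility (and hence the starting division algebra $A$), so that the input hypotheses of Theorem \ref{thm:main1} are satisfied over the base field $\mathbb{Q}$ rather than over some larger number field as in the preceding corollary.
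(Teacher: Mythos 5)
Your proof is correct and follows exactly the paper's route: the paper likewise obtains $A$ from Sonn's theorem (admissibility of $G$ over $\mathbb{Q}$ given metacyclic Sylow subgroups) and then applies Theorem \ref{thm:main1} to produce the chain, with the admissibility of each $G_i$ over $Z_i$ coming from Theorem \ref{thm:subgroups of G are admissible}. Your added remark that each $A_i$ is a division algebra because it is a finite-dimensional subalgebra of the division algebra $A$ is a correct and welcome detail that the paper leaves implicit.
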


\section{How to construct crossed product division algebras  containing a given abelian Galois field extension as a maximal
subfield} \label{sec:last}

For $S$ a unital ring and $\tau$ an injective endomorphism
of $S$,
$$f(t) =  t^q - c \in S[t;\tau]$$
 is an invariant twisted polynomial, if
 $$\tau^q(z) c = c \tau^i(z)\text{ and }\tau(c)
= c$$
 for all $z \in S$, $0\leq i<q$.

Let $M/F$ be a Galois field extension of degree $n$ with abelian Galois group $G =
\mathrm{Gal}(M/F)$. We now show how to canonically
construct crossed product division algebras of degree $n$ over $F$ containing $M$ as a
subfield. This generalizes a result by Albert  in which $n = 4$ and $G\cong\mathbb{Z}_2\times\mathbb{Z}_2 $
\cite[p. 186]{albert1939structure}, cf. also \cite[Theorem 2.9.55]{J96}:
For $n=4$ every central division algebra containing a quartic abelian extension $M$ with Galois group
$\mathbb{Z}_2\times\mathbb{Z}_2 $
 can be obtained this way \cite[p. 186]{albert1939structure}, that means as a generalized cyclic algebra $(D,\tau,c)$
 with $D$ a quaternion algebra over its center.

 Another
 way to construct such a crossed product algebra is via generic algebras, using a process going back to Amitsur
 and Saltman \cite{AmSalt}, described also in \cite[4.6]{J96}.

As $G$ is a finite abelian group, we have a chain of  subgroups
$$\{ 1 \} = G_0 \leq \ldots \leq G_k = G,$$
such that
$G_{j} \triangleleft G_{j+1}$ and $G_{j+1} / G_{j}$ is cyclic
of prime order $q_{j} > 1$ for all $j \in \{ 0, \ldots, k-1 \}$.
We use this chain to construct the algebras we want:

$G_1 = < \sigma_1>$ is cyclic of prime order $q_0 > 1$ for some $\sigma_1 \in G$. Let $\tau_0 = \sigma_1$.
Choose any $c_0 \in F^{\times}$ that satisfies
$$z \tau_0(z) \cdots \tau_0^{q_0-1}(z) \neq c_0$$
for all $z \in M$ and define
$$f(t_0)=t_0^{q_0}-c_0\in M[t_0;\tau_0].$$
Since  $\tau_0$ has order $q_0$, we have $\tau_0^{q_0}(z) c_0 = z c_0 = c_0 z$
for all $z \in M$, so that $f(t_0)$ is an invariant twisted polynomial and
hence
$$A_1 = M[t_0;\tau_0] /M[t_0;\tau_0](t_0^{q_0}-c_0)$$
is an associative  algebra which is cyclic of degree $q_0$
over ${\rm Fix}(\tau_0)$.
Moreover, $f(t_0)$ is irreducible by \cite[2.6.20 (i)]{J96}  and therefore $A_1$  is a division algebra.

Now  $G_2/G_1$ is cyclic of prime order $q_1$, say
$$G_2/G_1 = \{\sigma_2^i G_1 \ \vert \ i \in \mathbb{Z} \}$$
for some $\sigma_2 \in
G_2$ where $\sigma_2^{q_1} \in G_1$. As $\sigma_2^{q_1} \in G_1$ we
have $\sigma_2^{q_1} = \sigma_1^{\mu}$ for some $\mu \in \{ 0,
\ldots, q_0-1 \}$.
Define $c_1 = l_1 t_0^{\mu}$ for some $l_1 \in F^{\times}$
and define the map $$\tau_1
: A_1\rightarrow A_1, \ \sum_{i=0}^{q_0-1} m_i t_0^i \mapsto
\sum_{i=0}^{q_0-1} \sigma_2(m_i) t_0^i,$$ which is an automorphism of
$A_1$ by a straightforward calculation.

 Denote the multiplication in $A_1$ by $\circ$. Then
$$\tau_1(c_1) =\sigma_2(l_1) t_0^{\mu} = l_1 t_0^{\mu} = c_1.$$
We have
\begin{align*}
\tau_1^{q_1} \Big( \sum_{i=0}^{q_0-1} m_i t_0^i \Big) \circ c_1
&= \sum_{i=0}^{q_0-1} \sigma_2^{q_1}(m_i) t_0^i \circ l_1
t_0^{\mu} = \sum_{i=0}^{q_0-1} l_1 \sigma_1^{\mu}(m_i) t_0^i
\circ t_0^{\mu} \\
\end{align*}
and
\begin{align*}
c_1 \circ \sum_{i=0}^{q_0-1} m_i t_0^i &= l_1 t_0^{\mu}
\circ \sum_{i=0}^{q_0-1} m_i t_0^i = \sum_{i=0}^{q_0-1} l_1
\sigma_1^{\mu}(m_i) t_0^{\mu} \circ t_0^i \\
\end{align*}
for all $m_i \in M$. Hence $\tau_1^{q_1}(z) \circ c_1 = c_1
\circ z$ for all $z \in A_1$ and $\tau_1(c_1) = c_1$, thus
$$f(t_1)=t_1^{q_1}-c_1\in A_1[t_1;\tau_1]$$
is an invariant  twisted polynomial and
$$A_2 = A_1[t_1;\tau_1]/A_1[t_1;\tau_1](t_1^{q_1}-c_1)$$
is a finite-dimensional associative algebra over
${\rm Fix}(\tau_1)\cap C(A_1)={\rm Fix}(\tau_1)\cap {\rm Fix}(\tau_0)\supset F$ \cite{Pu16}.

Again, $G_3/G_2$ is cyclic of prime order $q_2$, say
$$G_3/G_2 = \{\sigma_3^i G_2 \ \vert \ i \in \mathbb{Z} \}$$
for some $\sigma_3 \in
G$ with $\sigma_3^{q_2} \in G_2$. Write
$$\sigma_3^{q_2} =
\sigma_2^{\lambda_1} \sigma_1^{\lambda_0}$$
for some $\lambda_1 \in \{
0, \ldots, q_1-1 \}$ and $\lambda_0 \in \{ 0, \ldots, q_0-1 \}$.
The map $$H_{\sigma_3}: A_1 \rightarrow A_1, \
\sum_{i=0}^{q_0-1} m_i t_0^i \mapsto \sum_{i=0}^{q_0-1} \sigma_3(m_i)t_0^i,$$
is an automorphism of $A_1$ by
a straightforward calculation. Define
$$\tau_2:
A_2 \rightarrow A_2, \ \sum_{i=0}^{q_1-1} x_i t_1^i \mapsto
\sum_{i=0}^{q_1-1} H_{\sigma_3}(x_i) t_1^i \ (x_i \in A_1).$$
Then a straightforward calculation using that $H_{\sigma_3}$ commutes with $\tau_1$ and $H_{\sigma_3}(c_1) =
c_1$ shows that $\tau_2$ is an automorphism of $A_2$. Define
$$c_2 = l_2 t_0^{\lambda_0} t_1^{\lambda_1}$$
for some $l_2 \in F^{\times}$.
Denote the
multiplication in $A_i$ by $\circ_{A_i}$ and let $x_i =
\sum_{j=0}^{q_0-1} y_{ij} t_0^j \in A_1$, $y_{ij} \in M$, $i \in \{
0, \ldots, q_1-1 \}$. Then
$$\tau_2(c_2) = \tau_2(l_2 t_0^{\lambda_0}
t_1^{\lambda_1}) = H_{\sigma_3}(l_2 t_0^{\lambda_0})
t_1^{\lambda_1} = l_2 t_0^{\lambda_0} t_1^{\lambda_1} = c_2.$$
Furthermore we have
\begin{align*}
\tau_2^{q_2} \Big( \sum_{i=0}^{q_1-1} x_i t_1^i \Big) \circ_{A_2} c_2
&= \sum_{i=0}^{q_1-1} H_{\sigma_3}^{q_2}(x_i) t_1^i \circ_{A_2} l_2
t_0^{\lambda_0} t_1^{\lambda_1} \\ &= \sum_{i=0}^{q_1-1}
\sum_{j=0}^{q_0-1} \sigma_3^{q_2}(y_{ij}) t_0^j t_1^i \circ_{A_2} l_2
t_0^{\lambda_0} t_1^{\lambda_1} \\ &= \sum_{i=0}^{q_1-1}
\sum_{j=0}^{q_0-1} \sigma_2^{\lambda_1}(\sigma_1^{\lambda_0}(y_{ij}))
t_0^j t_1^i \circ_{A_2} l_2 t_0^{\lambda_0} t_1^{\lambda_1} \\ &=
\sum_{i=0}^{q_1-1} \Big( \sum_{j=0}^{q_0-1}
\sigma_2^{\lambda_1}(\sigma_1^{\lambda_0}(y_{ij})) t_0^j \circ_{A_1}
\tau_1^i(l_2 t_0^{\lambda_0}) \Big) t_1^i \circ_{A_2} t_1^{\lambda_1}
\\ &= \sum_{i=0}^{q_1-1} \Big( \sum_{j=0}^{q_0-1} l_2
\sigma_2^{\lambda_1}(\sigma_1^{\lambda_0}(y_{ij})) t_0^j \circ_{A_1}
t_0^{\lambda_0} \Big) t_1^i \circ_{A_2} t_1^{\lambda_1},
\end{align*}
and
\begin{align*}
c_2 \circ_{A_2} \sum_{i=0}^{q_1-1} x_i t_1^i &= l_2 t_0^{\lambda_0}
t_1^{\lambda_1} \circ_{A_2} \sum_{i=0}^{q_1-1} x_i t_1^i =
\sum_{i=0}^{q_1-1} \big( l_2 t_0^{\lambda_0} \circ_{A_1}
\tau_1^{\lambda_1}(x_i) \big) t_1^{\lambda_1} \circ_{A_2} t_1^i \\ &=
\sum_{i=0}^{q_1-1} \sum_{j=0}^{q_0-1} \big( l_2 t_0^{\lambda_0}
\circ_{A_1} \sigma_2^{\lambda_1}(y_{ij}) t_0^j \big) t_1^{\lambda_1}
\circ_{A_2} t_1^i \\ &= \sum_{i=0}^{q_1-1} \Big( \sum_{j=0}^{q_0-1}
l_2 \sigma_1^{\lambda_0}(\sigma_2^{\lambda_1}(y_{ij}))
t_0^{\lambda_0} \circ_{A_1} t_0^j \Big) t_1^{\lambda_1} \circ_{A_2}
t_1^i.
\end{align*}
Hence $\tau_2^{q_2}(z) \circ_{A_2} c_2 = c_2 \circ_{A_2} z$ for all $z \in A_2$ and $\tau_2(c_2) = c_2$, therefore
$$f(t_2)=t_2^{q_2}-c_2\in A_2[t_2;\tau_2]$$
is an invariant  twisted polynomial and thus
$$A_3=A_2[t_2;\tau_2]/A_2[t_2;\tau_2](t_2^{q_2}-c_2)$$
is a finite-dimensional associative  algebra over
  $${\rm Fix}(\tau_2) \cap C(A_2) = {\rm Fix}(\tau_0)\cap {\rm Fix}(\tau_1)\cap {\rm Fix}(\tau_2)\supset F$$
   \cite{Pu16}. Continuing in this manner we obtain a chain
$M = A_0 \subset \ldots \subset A_k$
of finite-dimensional associative algebras
$$A_{i+1} = A_i[t_i;\tau_i]/A_i[t_i;\tau_i](t_i^{q_i}-c_i)$$
over
$${\rm Fix}(\tau_i)\cap C(A_i) = {\rm Fix}(\tau_0)\cap {\rm Fix}(\tau_1)\cap \dots\cap{\rm Fix}(\tau_i)\supset F,$$
for all $i \in \{ 0, \ldots, k-1 \}$, where $\tau_0 = \sigma_1$ and
$\tau_i$ restricts to $\sigma_{i+1}$ on $M$ for all $i \in \{ 0, \ldots,k-1 \}$. Moreover,
$$[A_i:M] = [A_i: A_{i-1}] \cdots [A_1:M] =\prod_{l=0}^{i-1} q_l$$
hence
$$[A_k:F] = \big( \prod_{l=0}^{k-1} q_l\big) n = n^2,$$
and $A_k$ contains $M$ as a subfield.

\begin{lemma} \label{lem:tau_i has inner order q_i}
For all $i \in \{ 0, \ldots, k-1 \}$, $\tau_i:A_i \rightarrow A_i$ has inner order $q_i$.
\end{lemma}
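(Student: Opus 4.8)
The plan is to establish the two halves of "$\tau_i$ has inner order $q_i$" separately: that $\tau_i^{q_i}$ is inner, and that no smaller positive power of $\tau_i$ is. The first half is already built into the construction. Since $f(t_i) = t_i^{q_i} - c_i$ is an invariant polynomial in $A_i[t_i;\tau_i]$, the defining relations force $\tau_i^{q_i}(z)\, c_i = c_i z$ for all $z \in A_i$ together with $\tau_i(c_i) = c_i$; equivalently $\tau_i^{q_i} = I_{c_i}$ is the inner automorphism $z \mapsto c_i z c_i^{-1}$. Hence the inner order of $\tau_i$ is at most $q_i$, and because $q_i$ is prime it remains only to rule out that $\tau_i^m$ is inner for every $m$ with $1 \le m < q_i$.

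To do this I would restrict everything to the subfield $M$ and use that $\tau_i(M)=M$ with $\tau_i|_M = \sigma_{i+1}$, so that $\tau_i^m|_M = \sigma_{i+1}^m$ for every $m$. Suppose toward a contradiction that $\tau_i^m = I_u$ for some invertible $u \in A_i$ with $1 \le m < q_i$. Restricting to $M$ gives $u x u^{-1} = \sigma_{i+1}^m(x)$ for all $x \in M$; in particular $u$ normalises $M$. The heart of the argument is then to show that conjugation by any element of $A_i^{\times}$ that normalises $M$ can only induce an automorphism lying in $G_i = \langle \sigma_1, \dots, \sigma_i\rangle$, and never $\sigma_{i+1}^m$ unless $q_i \mid m$.

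For the computation I would use that the monomials $t_0^{a_0} \cdots t_{i-1}^{a_{i-1}}$ with $0 \le a_j < q_j$ form a left $M$-basis of $A_i$ (this is the iterated skew-polynomial basis, whose cardinality matches $[A_i:M] = \prod_{l=0}^{i-1} q_l$). Writing $u = \sum m_{\mathbf{a}}\, t_0^{a_0}\cdots t_{i-1}^{a_{i-1}}$ with $m_{\mathbf{a}} \in M$ and pushing $x \in M$ leftward through a monomial via $t_j x = \sigma_{j+1}(x) t_j$ yields $t_0^{a_0}\cdots t_{i-1}^{a_{i-1}}\, x = \rho_{\mathbf{a}}(x)\, t_0^{a_0}\cdots t_{i-1}^{a_{i-1}}$, where $\rho_{\mathbf{a}} = \sigma_1^{a_0}\cdots\sigma_i^{a_{i-1}} \in G_i$ (here I use that $G$ is abelian, so the order of composition is irrelevant). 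Comparing the coefficients of $ux = \sigma_{i+1}^m(x)\, u$ against the $M$-basis, for each $\mathbf{a}$ with $m_{\mathbf{a}}\neq 0$ one obtains $\rho_{\mathbf{a}}(x) = \sigma_{i+1}^m(x)$ for all $x \in M$, i.e. $\sigma_{i+1}^m = \rho_{\mathbf{a}} \in G_i$. Since some $m_{\mathbf{a}}$ is non-zero ($u$ being invertible), this forces $\sigma_{i+1}^m \in G_i$, which by the primality of $G_{i+1}/G_i = \langle \sigma_{i+1}G_i\rangle$ happens precisely when $q_i \mid m$. This contradicts $1 \le m < q_i$, so $q_i$ is indeed the least positive integer making $\tau_i^{q_i}$ inner.

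I expect the main obstacle to be organisational rather than conceptual: correctly carrying the scalar $x \in M$ leftward through a product of the $t_j$ while tracking which automorphism acts at each step, and then justifying the coefficient comparison via left $M$-linear independence of the monomial basis. The one genuinely essential hypothesis to flag is the commutativity of $G$, which guarantees that $\rho_{\mathbf{a}}$ is a well-defined element of $G_i$ independent of the ordering of its factors; this is what makes the otherwise routine bookkeeping go through cleanly.
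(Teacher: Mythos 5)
Your proposal is correct, and its core computation --- expanding the conjugating element $u$ in a left $M$-basis of $A_i$, pushing $x \in M$ through the monomials via $t_j x = \sigma_{j+1}(x)\, t_j$, and comparing coefficients to conclude that the induced automorphism of $M$ lies in $G_i$, contradicting $\sigma_{i+1}^m \notin G_i$ --- is exactly the computation in the paper's proof; the paper merely phrases it as a recursive descent (expand $u$ over $A_{i-1}$ in powers of $t_{i-1}$, then expand those coefficients over $A_{i-2}$, and so on down to $M$) rather than using the flattened monomial basis. The genuine difference is the wrapper around this computation. The paper invokes Skolem--Noether twice: first to argue that if $\tau_i$ is trivial on $C(A_i)$ then $\tau_i$ is inner, so that by $\tau_i^{q_i}\vert_{C(A_i)} = \mathrm{id}$ and primality of $q_i$ only the case $m=1$ must be excluded; and then again to identify the inner order of $\tau_i$ with the order of $\tau_i\vert_{C(A_i)}$. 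You avoid Skolem--Noether entirely: you exhibit $\tau_i^{q_i} = I_{c_i}$ explicitly from the construction and rule out all powers $1 \le m < q_i$ uniformly by the same coefficient argument. This buys something real: Skolem--Noether requires $A_i$ to be finite-dimensional central simple over its center, which at this point in the construction rests on Tignol's theorem on generalized cyclic algebras and hence, implicitly, on already knowing the lemma for smaller indices --- an induction the paper leaves tacit. Your argument needs only the free left $M$-module structure of $A_i$ and no simplicity at all. One small correction: the commutativity of $G$ is not actually essential to the step $\rho_{\mathbf{a}} \in G_i$; the composite $\sigma_1^{a_0}\circ \cdots \circ \sigma_i^{a_{i-1}}$, taken in the order in which the scalar is pushed leftward, lies in $G_i$ simply because $G_i$ is a subgroup containing $\sigma_1, \dots, \sigma_i$. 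Abelianness only permits reordering the factors, which nothing in the argument requires.
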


\begin{proof}

The automorphism $\tau_0 = \sigma_1: M \rightarrow M$ has inner order $q_0$.

Fix $i \in \{ 1, \ldots, k-1 \}$.
 $A_i$ is finite-dimensional over $F$, so it is also finite-dimensional over its center $C(A_i) \supset F$.
Recall that $\tau_i^{q_i}(z)  c_i = c_i  z$ for all $z \in A_i$,
in particular $\tau_i^{q_i}\vert_{C(A_i)} = id $. As $q_i$ is prime this
means either $\tau_i \vert_{C(A_i)} = id $ or $\tau_i \vert_{C(A_i)}$ has order $q_i>1$.

Assume  that $\tau_i \vert_{C(A_i)} = id $, then $\tau_i$ is
an inner automorphism of $A_i$ by the Theorem of   Skolem-Noether, say $\tau_i(z) = uzu^{-1}$ for some
invertible $u \in A_i$, for all $z \in A_i$.
In particular $\tau_i(m)= \sigma_{i+1}(m) = umu^{-1}$ for all $m \in M$. Write
$$u =
\sum_{j=0}^{q_{i-1}-1} u_j t_{i-1}^j$$
for some $u_j \in A_{i-1}$, thus
\begin{align*}
\sigma_{i+1}(m)u &= \sigma_{i+1}(m) \sum_{j=0}^{q_{i-1}-1} u_j t_{i-1}^j =
\sum_{j=0}^{q_{i-1}-1} u_j t_{i-1}^j m \\ &= \sum_{j=0}^{q_{i-1}-1}
u_j \tau_{i-1}^j(m) t_{i-1}^j = \sum_{j=0}^{q_{i-1}-1} u_j
\sigma_{i}^j(m) t_{i-1}^j.
\end{align*}
for all $m \in M$. Choose $\eta_{i}$ with $u_{\eta_{i}}
\neq 0$ then
\begin{equation} \label{eqn:tau_i is not inner 1}
\sigma_{i+1}(m) u_{\eta_{i}} = u_{\eta_{i}}
\sigma_{i}^{\eta_{i}}(m),
\end{equation}
for all $m \in M$.

If $i = 1$ we are done. If $i \geq 2$ then
we can also write $u_{\eta_{i}} =
\sum_{l=0}^{q_{i-2}-1} w_l t_{i-2}^l$ for some $w_l \in A_{i-2}$,
therefore \eqref{eqn:tau_i is not inner 1} yields
\begin{align*}
\sigma_{i+1}(m) \sum_{l=0}^{q_{i-2}-1} w_l t_{i-2}^l &=
\sum_{l=0}^{q_{i-2}-1} w_l t_{i-2}^l \sigma_{i}^{\eta_{i}}(m)
= \sum_{l=0}^{q_{i-2}-1} w_l
\tau_{i-2}^l(\sigma_{i}^{\eta_{i}}(m)) t_{i-2}^l \\ &=
\sum_{l=0}^{q_{i-2}-1} w_l
\sigma_{i-1}^l(\sigma_{i}^{\eta_{i}}(m)) t_{i-2}^l,
\end{align*}
for all $m \in M$. Choose $\eta_{i-1}$ with $w_{\eta_{i-1}}
\neq 0$,  then $$\sigma_{i+1}(m) w_{\eta_{i-1}} = w_{\eta_{i-1}}
\sigma_{i-1}^{\eta_{i-1}}(\sigma_{i}^{\eta_{i}}(m)),$$ for
all $m \in M$.

Continuing in this manner we see that there exists $s \in M^{\times}$ such
that $$\sigma_{i+1}(m) s = s
\sigma_1^{\eta_1}(\sigma_2^{\eta_2}(\cdots
(\sigma_{i}^{\eta_{i}}(m) \cdots ),$$ for all $m \in M$, hence
$$\sigma_{i+1}(m) = \sigma_1^{\eta_1}(\sigma_2^{\eta_2}(\cdots
(\sigma_{i}^{\eta_{i}}(m) \cdots ),$$ for all $m \in M$ where
$\eta_j \in \{ 0, \ldots, q_{j-1}-1 \}$ for all $j \in \{ 1, \ldots,
i \}$. But $\sigma_{i+1} \notin G_i$ and thus
$$\sigma_{i+1} \neq \sigma_1^{\eta_1}\circ\sigma_2^{\eta_2}\circ \cdots \circ\sigma_{i}^{\eta_{i}},$$
a contradiction.

It follows that  $\tau_i \vert_{C(A_i)}$ has order  $q_i>1$. By the
Skolem-Noether Theorem the kernel of the restriction map
$\mathrm{Aut}(A_i) \rightarrow \mathrm{Aut}(C(A_i))$ is
the group of inner automorphisms of $A_i$, and so $\tau_i$ has inner order $q_i$.
\end{proof}

Let us furthermore assume that each $c_i$ above, $i \in \{ 0, \ldots, k-1 \}$,  is successively chosen such that
\begin{equation}\label{equ:last}
z \tau_i(z) \cdots \tau_i^{q_i-1}(z)\neq c_i
\end{equation}
for all $z \in A_i$, then using that $\tau_i$ has inner order $q_i$,
$$f(t_i)=t_i^{q_i}-c_i\in A_i[t_i;\tau_i]$$
is an irreducible twisted polynomial by Lemma \ref{lem:tau_i has inner order q_i} and thus $A_{i+1}$  is  a division algebra \cite[1.3.16]{J96}.

\begin{proposition}
$C(A_k)=F$.
\end{proposition}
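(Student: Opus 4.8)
The plan is to prove by induction on $i$ that $C(A_i) = Z_i = {\rm Fix}(G_i)$ for every $i \in \{0,\ldots,k\}$; the Proposition is then the case $i=k$, since $G_k = G$ and ${\rm Fix}(G) = F$ because $M/F$ is Galois. The base case is immediate: $A_0 = M$ is a field, so $C(A_0) = M = {\rm Fix}(\{1\}) = {\rm Fix}(G_0) = Z_0$.

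For the inductive step I would first observe that, under the standing assumption \eqref{equ:last}, each $A_i$ is a division algebra, hence a finite-dimensional simple algebra over its center $C(A_i)$, and that by Lemma \ref{lem:tau_i has inner order q_i} the automorphism $\tau_i$ has inner order $q_i$ — its proof in fact shows that $\tau_i|_{C(A_i)}$ has order exactly $q_i$. Consequently $A_{i+1} = A_i[t_i;\tau_i]/A_i[t_i;\tau_i](t_i^{q_i}-c_i) = (A_i,\tau_i,c_i)$ is a generalized cyclic algebra in the sense of Section \ref{sec:gca}, and therefore central simple over ${\rm Fix}(\tau_i|_{C(A_i)})$. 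This gives $C(A_{i+1}) = {\rm Fix}(\tau_i|_{C(A_i)})$.

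Next I would use the induction hypothesis $C(A_i) = Z_i = {\rm Fix}(G_i) \subseteq M$. Since $\tau_i|_M = \sigma_{i+1}$ and $Z_i \subseteq M$, the restriction $\tau_i|_{Z_i}$ agrees with $\sigma_{i+1}|_{Z_i}$, so
$$C(A_{i+1}) = {\rm Fix}(\tau_i|_{Z_i}) = Z_i \cap {\rm Fix}(\sigma_{i+1}) = {\rm Fix}(G_i) \cap {\rm Fix}(\langle \sigma_{i+1}\rangle) = {\rm Fix}(G_i\langle\sigma_{i+1}\rangle).$$
Because $G$ is abelian and $G_{i+1}/G_i = \langle \sigma_{i+1}G_i\rangle$, we have $G_{i+1} = G_i\langle\sigma_{i+1}\rangle$, whence $C(A_{i+1}) = {\rm Fix}(G_{i+1}) = Z_{i+1}$, completing the induction. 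Setting $i = k$ then yields $C(A_k) = {\rm Fix}(G) = F$.

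The step that needs the most care is the identification $C(A_{i+1}) = {\rm Fix}(\tau_i|_{C(A_i)})$: it relies on $A_{i+1}$ genuinely being a generalized cyclic algebra, which requires $A_i$ to be simple with center $C(A_i)$ and $\tau_i|_{C(A_i)}$ to have finite order $q_i$ — the former supplied by \eqref{equ:last}, the latter by (the proof of) Lemma \ref{lem:tau_i has inner order q_i} — together with the centrality statement recalled in Section \ref{sec:gca}. The other delicate ingredient, namely that $C(A_i) \subseteq M$ so that $\tau_i$ restricts to $\sigma_{i+1}$ on it, is folded into the induction through the equality $Z_i = {\rm Fix}(G_i) \subseteq M$.
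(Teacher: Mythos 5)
Your proof is correct, but it takes a genuinely different route from the paper's. You work upward: by induction you prove the stronger statement that $C(A_i)={\rm Fix}(G_i)=Z_i$ for every $i$, the engine being the fact recalled in Section \ref{sec:gca} that a generalized cyclic algebra $(S,\sigma,d)$ is central simple over the fixed field of $\sigma$ on $C(S)$ (the Jacobson/Tignol citation); Lemma \ref{lem:tau_i has inner order q_i} enters only to certify that the degree $q_i$ of $t_i^{q_i}-c_i$ equals the order of $\tau_i|_{C(A_i)}$, so that $(A_i,\tau_i,c_i)$ genuinely falls under that definition, and the standing assumption \eqref{equ:last} enters to make $A_i$ a division algebra, hence simple. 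The paper instead works downward, entirely inside $A_k$: it takes $z\in C(A_k)$, expands $z=\sum_j z_j t_{k-1}^j$ with $z_j\in A_{k-1}$, uses commutation with $A_{k-1}$ together with Lemma \ref{lem:tau_i has inner order q_i} to force $z_j=0$ for $j\geq 1$ (a nonzero $z_j$ would be invertible and would make $\tau_{k-1}^j$ inner), so $z\in C(A_{k-1})$, and iterates down to $z\in M$; it then rules out $z\notin F$ by conjugating $z$ with a monomial $t_0^{i_0}\cdots t_{k-1}^{i_{k-1}}$ realizing an arbitrary $\rho\in G$ (this is where the abelianness of $G$ and the choice of the generators $\sigma_j$ are used). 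The trade-off: your argument is shorter, identifies the centers of all the intermediate algebras $A_i$ rather than only $A_k$, and --- if one invokes Tignol's reference for merely simple $S$ --- would not strictly need the division-algebra hypothesis; the paper's argument is self-contained, relying only on basis computations inside $A_k$ and never on the centrality fact quoted from the literature in Section \ref{sec:gca}. Both proofs hinge on Lemma \ref{lem:tau_i has inner order q_i} at the same critical point, and both, as written, operate under \eqref{equ:last} (you need it so that $A_i$ is simple, the paper needs it to invert the coefficients $z_j$).
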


\begin{proof}
  $F \subset C(A_k)$ by construction.
Let now
$$z = z_0 + z_1 t_{k-1} + \ldots + z_{q_{k-1}-1} t_{k-1}^{q_{k-1}-1} \in C(A_k)$$
where
$z_i \in A_{k-1}$. Then  $z$ commutes with all $l \in A_{k-1}$, hence $l z_i = z_i \tau_{k-1}^i(l)$ for all $i \in \{ 0,
\ldots, q_{k-1}-1 \}$. This implies $z_0 \in C(A_{k-1})$
and $z_i = 0$ for all $i \in \{ 1, \ldots, q_{k-1}-1 \}$, otherwise
$z_i$ is invertible and $\tau_{k-1}^i$ is inner, a contradiction by
Lemma \ref{lem:tau_i has inner order q_i}. Thus $z = z_0 \in
C(A_{k-1})$. A similar argument shows $z \in
C(A_{k-1})$ and continuing in this manner we conclude $z
\in  M=C(A_0)$.

Suppose for contradiction $z \notin F$, then $\rho(z) \neq z$ for
some $\rho \in G$. Since the $\sigma_{i+1}$ were chosen so that
 they generate the cyclic factor groups $ G_{i+1}/G_i,$ we can write
 $\rho =
\sigma_1^{i_0} \circ\sigma_2^{i_1} \circ\cdots \circ \sigma_{k}^{i_{k-1}}$ for some
$i_s \in \{ 0, \ldots, q_s-1 \}$. We have
\begin{align*}
t_0^{i_0} t_1^{i_1} \cdots t_{k-1}^{i_{k-1}} z &=
\sigma_1^{i_0}(\sigma_2^{i_1}(\cdots(\sigma_{k}^{i_{k-1}}(z) \cdots
) t_0^{i_0} t_1^{i_1} \cdots t_{k-1}^{i_{k-1}} \\ &= \rho(z)
t_0^{i_0} t_1^{i_1} \cdots t_{k-1}^{i_{k-1}} \neq z t_0^{i_0}
t_1^{i_1} \cdots t_{k-1}^{i_{k-1}},
\end{align*}
contradicting the assumption that $z \in C(A_k)$. Therefore $C(A_k) \subset F$.
\end{proof}

This yields a recipe for constructing a $G$-crossed product division algebra $A=A_k$ over $F$ with maximal
subfield $M$ provided it is possible to find suitable $c_i$'s satisfying (\ref{equ:last}).

 By Corollary \ref{thm:Petit (29) Galois version}, every abelian crossed product division algebra that is solvable
 can be obtained this way, starting with a suitable $M/F$.


\end{document}